\documentclass{amsart}
\usepackage{indentfirst,enumerate,cite,amssymb,amsfonts,amsmath,amsthm,mathrsfs,dsfont}
\usepackage{color}
\usepackage[colorlinks=true,linkcolor=blue,citecolor=blue]{hyperref}

\usepackage[onehalfspacing]{setspace}\setstretch{1,15}

\numberwithin{equation}{section}
\theoremstyle{plain}
\newtheorem{thm}{Theorem}[section]
\newtheorem{prop}[thm]{Proposition}

\newtheorem{lemma}[thm]{Lemma}
\newtheorem{defi}[thm]{Definition}
\newtheorem{remark}[thm]{Remark}
\newtheorem{ex}[thm]{Example}

\newcommand{\HS}{{\mathtt{HS}}}
\newcommand{\Tr}{\mbox{\emph{Tr}}}
\newcommand{\N}{{\mathbb{N}}}
\newcommand{\Q}{\mathbb{Q}}
\newcommand{\Z}{\mathbb{Z}}
\newcommand{\R}{\mathbb{R}}
\newcommand{\C}{\mathbb{C}}

\newcommand{\St}{{\mathbb S}^3}
\newcommand{\Sth}{\widehat{{\mathbb S}^3}}
\newcommand{\TS}{\mathbb{T}^1\times\St}

\newlength{\dhatheight}
\newcommand{\doublehat}[1]{%
	\settoheight{\dhatheight}{\ensuremath{\hat{#1}}}
	\addtolength{\dhatheight}{-0.15ex}
	\widehat{\vphantom{\rule{5pt}{\dhatheight}}%
		\smash{\widehat{#1}}}}

\begin{document}

%
%
%
%
%
%
%
%
%

\title[Global analytic hypoellipticity on $\mathbb{T}^1\times\mathbb{S}^3$]{Global analytic hypoellipticity for a class of \\ evolution operators on $\mathbb{T}^1\times\mathbb{S}^3$}



\author[Alexandre Kirilov]{Alexandre Kirilov}
\address{
	Universidade Federal do Paran\'{a}, 
	Departamento de Matem\'{a}tica,
	C.P.19096, CEP 81531-990, Curitiba, Brazil
}
\email{akirilov@ufpr.br}


\author[Ricardo Paleari]{Ricardo Paleari da Silva}
\address{
	Universidade Federal do Paran\'{a},
	Programa de P\'os-Gradua\c c\~ao de Matem\'{a}tica, 
	C.P.19096, CEP 81531-990, Curitiba, Brazil
}
\email{ricardopaleari@gmail.com}


\author[Wagner de Moraes]{Wagner A. A. de Moraes}
\address{
	Universidade Federal do Paran\'{a},
	Programa de P\'os-Gradua\c c\~ao de Matem\'{a}tica,
	C.P.19096, CEP 81531-990, Curitiba, Brazil
}
\email{wagneramat@gmail.com}

\subjclass[2010]{Primary 35R03, 58D25; Secondary 35H10, 43A80}

\keywords{Evolution equation, Partial Fourier Series, Three dimensional sphere, Global analytic hypoellipticity, Low order perturbations}

\begin{abstract}

In this paper, we present necessary and sufficient conditions to have global analytic hypoellipticity for a class of first-order operators defined on $\mathbb{T}^1 \times \mathbb{S}^3$.  In the case of real-valued coefficients, we prove that an operator in this class is conjugated to a constant-coefficient operator satisfying a Diophantine condition, and that such conjugation preserves the global analytic hypoellipticity. In the case where the imaginary part of the coefficients is non-zero, we show that the operator is globally analytic hypoelliptic if the Nirenberg-Treves condition ($\mathcal{P}$) holds, in addition to an analytic Diophantine condition.
	
\end{abstract}
\maketitle

\section{Introduction}

In this work, we are concerned with the global analytic hypoellipticity of first-order operators of  form
\begin{equation}\label{EQ:L}
P = \partial_t + (a+ib)(t) \partial_0 + q,
\end{equation}
where $q \in \mathbb{C}$, $a$ and $b$ are real-valued and real analytic functions on $\mathbb{T}^1 = \R/(2\pi\Z)$, and $\partial_0$ is the left-invariant vector field on $\St$ known as the neutral operator.   

The operator $P$ is said to be globally analytic hypoelliptic (GAH) in $\TS$ if the conditions $u \in \mathcal{D}' (\TS)$ and $Pu \in C^\omega(\TS)$ imply that $u \in C^\omega(\TS).$ 

\medskip
The global analytic (and smooth) hypoellipticity of vector fields and systems of vector fields has been extensively studied on tori. Within these studies, we cite as most important and inspiring for this project:   \cite{AGKM18,AKM,BNZ00,BZ05,BZ08,Ber94,Ber99,bergamasco2017existence,CH98,GW72,HP00,Hou79,Hou82,Pet11}.

In the specific case of global analytic hypoellipticity in the torus $\mathbb{T}^2$, Berga\-masco proved  in \cite{Ber99} that $\partial_t+(a(t)+ib(t)) \partial_x$  is (GAH) if and only if either $b(t)$ does not change sign or $b\equiv0$ and the real number $a_0=\frac{1}{2\pi} \int_0^{2\pi}a(t)dt$ is neither rational nor an exponential Liouville number. 

Let us recall that an irrational number $\lambda$ is said to be an exponential Liouville number if there exists $\epsilon > 0$ such that the inequality $|\lambda -p/q|\leq e^{-\epsilon q}$ has infinitely many rational numbers $p/q$. 

Next, in \cite{BZ08}, Bergamasco and Zani proved that, if there exists a non-singular, globally analytic hypoelliptic vector field $L$ on a compact surface $M$, then $M$ is real analytically diffeomorphic to $\mathbb{T}^2$ and, either the Nirenberg-Treves condition ($\mathcal{P}$) holds in $M$, or there are coordinates on which we can write $L = g(x, t)(\partial_t + \lambda \partial_x)$, where $g\neq 0$ everywhere and $\lambda$ is a real number which is neither rational nor exponential-Liouville.

Our results in $\TS$ and the characterization given by Bergamasco and Zani in dimension 2 above, suggest the existence of an analytic version of the famous Greenfield's and Wallach's conjecture, see \cite{F08}. In this way, our operators are essentially low-order perturbations of left-invariant vector fields.

In the case of constant-coefficients operators our main result is as follows.

\begin{thm}\label{const-coeff-intr}
	 Let $c,q\in\C$. The operator $L=\partial_t + c\partial_0 + q$ is globally analytic hypoelliptic if and only if for all $B>0$, there is  $K_B>0$ such that for all $k, \ell \in \Z$,
	\begin{equation}\label{ADC3-intr}
	\left|k+\tfrac{1}{2}\ell c-iq\right| \geq K_B e^{-B(|k|+|\ell|)}. \tag{ADC$_3$}
	\end{equation}
\end{thm}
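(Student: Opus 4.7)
The plan is to pass to the joint Fourier side: expand $u\in\mathcal{D}'(\TS)$ via ordinary Fourier series on $\mathbb{T}^1$ and the Peter--Weyl expansion on $\St\simeq SU(2)$,
$$u(t,x)=\sum_{k\in\Z}\sum_{\ell_0}\sum_{m,n}\widehat{u}(k,\ell_0)_{mn}\,e^{ikt}\,t^{\ell_0}_{mn}(x),$$
the sum in $\ell_0$ running over irreducible representations and $t^{\ell_0}_{mn}$ denoting the matrix coefficients indexed by weights $m,n$. The characterisation of analyticity established earlier in the paper then reads: $u\in C^\omega(\TS)$ iff there exist $C,\epsilon>0$ with $|\widehat u(k,\ell_0)_{mn}|\leq C\,e^{-\epsilon(|k|+\ell_0)}$ uniformly in the indices. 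Since $\partial_0$ acts on $e^{ikt}t^{\ell_0}_{mn}$ as multiplication by $in$, the operator $L$ is diagonal with symbol
$$\sigma(k,\ell)=i\bigl(k+\tfrac{1}{2}\ell c-iq\bigr),\qquad \ell:=2n\in\Z,$$
and \eqref{ADC3-intr} is precisely the subexponential lower bound $|\sigma(k,\ell)|\geq K_B e^{-B(|k|+|\ell|)}$ (which in particular forces $\sigma$ to be nowhere zero).

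\emph{Sufficiency.} If $u\in\mathcal{D}'(\TS)$ and $f:=Lu\in C^\omega(\TS)$, then $\widehat u(k,\ell_0)_{mn}=\sigma(k,2n)^{-1}\widehat f(k,\ell_0)_{mn}$. The exponential decay of $\widehat f$ at some rate $\epsilon$, combined with \eqref{ADC3-intr} applied with any $B\in(0,\epsilon)$ and the trivial bound $|2n|\leq\ell_0$, produces exponential decay of $\widehat u$ at rate $\epsilon-B>0$, so $u\in C^\omega(\TS)$.

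\emph{Necessity.} If \eqref{ADC3-intr} fails, choose $B_0>0$ and integers $(k_j,\ell_j)$ with $|k_j|+|\ell_j|\to\infty$ and $|\sigma(k_j,\ell_j)|\leq e^{-B_0(|k_j|+|\ell_j|)}$. For each $j$ pick the representation $\ell_{0,j}:=|\ell_j|$ and a matrix index $(m_j,n_j)$ with $2n_j=\ell_j$, and define $u$ by setting $\widehat u(k_j,\ell_{0,j})_{m_j n_j}=1$ and all other Fourier coefficients to zero. Bounded coefficients make $u$ a genuine distribution on $\TS$, but the absence of decay prevents $u\in C^\omega(\TS)$. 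On the other hand $Lu$ carries only the coefficients $\sigma(k_j,\ell_j)$ at the same modes, which decay exponentially in $|k_j|+\ell_{0,j}$; hence $Lu\in C^\omega(\TS)$, contradicting GAH.

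The main delicate point is the bookkeeping between the integer index $\ell\in\Z$ of \eqref{ADC3-intr} and the (half-)integer weights of $SU(2)$, together with the uniformity over the matrix-entry indices in the Fourier characterisation of analyticity. Once these are in hand, the theorem reduces to the standard Fourier-multiplier estimate familiar from the torus case.
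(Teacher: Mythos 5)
Your overall strategy (diagonalize $L$ on the joint Fourier side and treat \eqref{ADC3-intr} as a multiplier lower bound) is legitimate and is in spirit close to what the paper does, except that the paper routes the argument through the set $\mathcal{N}$, Proposition \ref{gahconst} (quoted from \cite{KMR19c}) and the equivalence of (ADC) with (ADC$_3$) in Lemma \ref{diophantine2}. Your sufficiency half is essentially correct, up to two small repairs: the weight bound is $|2n|\leq 2\ell_0$, not $|2n|\leq\ell_0$ (so take $B<\epsilon/2$), and the exponential-decay characterization of $C^\omega(\TS)$ in \emph{both} variables is not literally stated in the paper (only the partial-Fourier version, Proposition \ref{functionanaly} with Remark \ref{noderivative}), though it is standard and true.

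The genuine gap is in the necessity direction. Because \eqref{ADC3-intr} as stated requires the lower bound for \emph{all} $k,\ell\in\Z$, it can fail in a way your argument does not cover: the symbol may have an exact zero at finitely many points of $\Z^2$ while all nonzero values stay uniformly bounded below. Concretely, take $c=i$ and $q=\tfrac12$; then $k+\tfrac12\ell c-iq=k+\tfrac{i}{2}(\ell-1)$ vanishes only at $(k,\ell)=(0,1)$ and has modulus $\geq\tfrac12$ everywhere else, so there is no sequence $(k_j,\ell_j)$ with $|k_j|+|\ell_j|\to\infty$ along which $|\sigma|$ is subexponentially small, and your construction would only produce a single Fourier mode, i.e.\ an analytic (trigonometric-polynomial) null solution, which contradicts nothing. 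The missing idea is precisely the paper's Remark \ref{finiteimpliesempty}: a zero at the weight $n_0=\ell_0/2$ recurs in \emph{every} representation $\ell\geq|n_0|$ with $\ell-n_0\in\N_0$, so one zero in the $\Z^2$ parametrization yields infinitely many zero modes with representation index tending to infinity; placing coefficient $1$ on all of them gives a bounded, nondecaying distributional solution of $Lu=0$ that is not analytic (Proposition \ref{Ninfinite}). You need to add this case to make necessity complete. Separately, fix the bookkeeping in your choice of representation: the smallest representation containing the weight $n_j=\ell_j/2$ is $\ell_{0,j}=|\ell_j|/2$ (and more generally any $\ell$ with $\ell-n_j\in\N_0$), not $|\ell_j|$, which for odd $\ell_j$ is an integer-spin representation that does not contain the half-integer weight $n_j$ at all.
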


For example, writing $c=a+ib$ if $b\neq 0$ and ${\rm Re}(q)/ b\notin \tfrac{1}{2}\mathbb{Z}$, then $L$ is  globally analytic hypoelliptic. And when $b = 0$ and $iq \in \mathbb{Z}$, then $L$ is  globally analytic hypoelliptic if and only if $a$ is neither rational nor an exponential Liouville number. 

This theorem follows directly from  Proposition \ref{Ninfinite}, Remark \ref{finiteimpliesempty}, Proposition \ref{gahconst} and Lemma \ref{diophantine2}, while  the details of the above example are given in Example \ref{ex_const-coef}.

For the general case \eqref{EQ:L}, we introduce the following notation:
\begin{equation*}\label{P0q-introd}
P_0 \doteq  \partial_t + (a_0+ib_0)\partial_0 + q,
\end{equation*}
where
\begin{equation*}\label{c0-introd}
a_0= \frac{1}{2\pi} \int_0^{2 \pi} a(s)ds \mbox{ \ and \ } b_0= \frac{1}{2\pi} \int_0^{2 \pi} b(s)ds.
\end{equation*}

\begin{thm}\label{mainthm-intr} 
	$P = \partial_t +(a+ib)(t)\partial_0 + q$ is (GAH) if and only if one of the following conditions holds:
	\begin{enumerate}
		\item if $b \not\equiv 0$ then $b$ does not change sign; and either 
		 $$\dfrac{{\rm Re}(q)}{b_0} \notin \tfrac{1}{2}\mathbb{Z} \mbox{ \ or \ } {\rm Im}(q)+{\rm Re}(q)\dfrac{a_0}{b_0} \notin \mathbb{Z}.$$
		\item if $b \equiv 0$, then $P_0$ is (GAH). 
	\end{enumerate}
\end{thm}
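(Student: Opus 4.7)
The plan is to pass to partial Fourier series on $\St$, then use an appropriate conjugation to reduce to the constant-coefficient setting covered by Theorem \ref{const-coeff-intr}. Via the Peter--Weyl decomposition of $\St$ in a basis that diagonalizes $\partial_0$, I would rewrite $Pu = f$ as an uncoupled family of first-order ODEs on $\mathbb{T}^1$ indexed by Peter--Weyl data, of the form
\[
\partial_t \widehat{u}_{\ell m n}(t) + i\lambda_{\ell,m}\,(a+ib)(t)\,\widehat{u}_{\ell m n}(t) + q\,\widehat{u}_{\ell m n}(t) = \widehat{f}_{\ell m n}(t),
\]
where $\lambda_{\ell,m}$ is the corresponding eigenvalue of $-i\partial_0$. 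Real analyticity on $\TS$ translates to uniform exponential decay of the coefficients $\widehat{u}_{\ell m n}(t)$ (together with their $t$-Fourier modes) in $(\ell,k)$, so (GAH) reduces to the statement that such decay on the right-hand side forces the same decay of $\widehat{u}$.

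Next, I would introduce the conjugation $T_{\tilde{A}} = e^{\tilde{A}(t)\partial_0}$, where $\tilde{A}(t) = \int_0^t (a(s)-a_0)\,ds$. Since $a - a_0$ has zero mean, $\tilde{A}$ is real-analytic and $2\pi$-periodic; $T_{\tilde{A}}$ acts modewise by the unimodular factor $e^{i\lambda_{\ell,m}\tilde{A}(t)}$, hence is an isomorphism of both $\mathcal{D}'(\TS)$ and $C^\omega(\TS)$. A short computation gives
\[
T_{\tilde{A}}\,P\,T_{\tilde{A}}^{-1} = P' := \partial_t + (a_0 + ib(t))\partial_0 + q,
\]
so $P$ is (GAH) if and only if $P'$ is. When $b\equiv 0$, $P' = P_0$, and part (2) follows directly from Theorem \ref{const-coeff-intr}.

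For $b\not\equiv 0$ with $b$ of constant sign, I would analyze $P'$ in Fourier directly. The homogeneous mode solutions carry the factor $e^{\lambda_{\ell,m}\int_0^t b(s)\,ds}$, and the sign hypothesis on $b$ makes this monotone in $t$, yielding a uniformly bounded one-sided Duhamel representation for the unique periodic solution. The arithmetic hypothesis in (1), via a direct check of the real and imaginary parts of $k + \tfrac{1}{2}\ell(a_0+ib_0)-iq$, amounts to non-resonance ensuring that the normalizing denominator is bounded below linearly in $|\ell|$. Combining these two estimates transfers the exponential decay of $\widehat{f}$ to $\widehat{u}$, establishing sufficiency of (1).

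Finally, necessity is by contraposition. If $b$ changes sign, the Nirenberg--Treves $(\mathcal{P})$ condition fails, and I would construct a distributional non-analytic solution by choosing Peter--Weyl modes whose homogeneous solutions exhibit sub-analytic amplitude concentrated near sign changes of $b$, adapting Bergamasco's $\mathbb{T}^2$ construction in \cite{Ber99}; if $b\not\equiv 0$ has constant sign but the arithmetic condition fails, a resonant Fourier mode of $P'$ gives a kernel element outside $C^\omega$. The main obstacle is the sign-change construction, which on $\St$ requires carrying out the index matching between the zeros of $b$ and the Peter--Weyl eigenvalues uniformly across the multi-index data, rather than in the scalar Fourier setting of $\mathbb{T}^2$.
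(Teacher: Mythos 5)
Your reduction steps essentially retrace the paper's: the partial Fourier decomposition, the unimodular conjugation removing the oscillating part of $a$ (the paper's $\Psi_a$ of Theorem \ref{equivalence2}, used there for $b\equiv 0$ and legitimately extendable as you do), the one-sided Duhamel representation whose exponential factor is controlled by the constant sign of $b$, and the observation that when $b_0\neq 0$ the arithmetic condition in (1) forces a uniform positive lower bound on the denominators $|1-e^{\mp 2\pi(imc_0+q)}|$ (this is Remark \ref{last-rem}; note the correct bound is a constant, not ``linear in $|\ell|$'', but a constant is all you need). The resonant case ($\mathrm{Re}(q)/b_0\in\tfrac12\Z$ and $\mathrm{Im}(q)+\mathrm{Re}(q)a_0/b_0\in\Z$) also goes as you indicate, provided you use that the same index $m$ recurs in every representation with $\ell\geq|m|$, so the resonance propagates to infinitely many $\ell$; a single resonant mode only yields an analytic kernel element (this is Proposition \ref{Ninfinite2} together with Remark \ref{finiteimpliesempty}).

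The genuine gap is the necessity when $b$ changes sign, which you only name as ``the main obstacle'' without an argument, and it is precisely the heart of the proof. Two points make your sketch unworkable as stated. First, when the non-resonance condition holds, every periodic homogeneous mode solution is zero, so there are no ``homogeneous solutions concentrated near sign changes'' to exploit: one must exhibit an analytic right-hand side $f$ and prove that the (unique) distributional solution $u$ of $Pu=f$ fails to be analytic, and this must work for every $q\in\C$. Second, Bergamasco-type constructions in the smooth setting rely on cut-off functions, which do not exist in $C^\omega$; this is exactly where the paper's technical work lies. The paper normalizes a sign change of $b$ from minus to plus at $t=0$, takes a maximum point $t^*$ of $B(t)=\int_0^t b(s)\,ds$, the analytic weight $\psi(t)=M+K(1-\cos t)+i(a(0)\sin t-A(t^*))$, and the data $\widehat f(t,\ell)_{\ell\ell}=d_\ell e^{-\ell\psi(t)}$ supported on the extreme matrix entries $m=n=\ell$; it then proves a sign lemma ($\varphi(t,s)\leq 0$ for $K$ large, Lemma \ref{phi<0}) so that $u$ is a distribution, and finally applies Sj\"ostrand's analytic stationary phase (steepest descent) to obtain $|\widehat u(t^*,\ell)|$ of order $\ell^{-1/2}$, which contradicts the exponential decay required by Proposition \ref{functionanaly}. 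Without an argument of this type, or a genuine substitute for it, the ``only if'' direction in the sign-changing case, and hence the theorem, is not established by your proposal.
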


This theorem follows from Theorems \ref{Lisgah}, \ref{equivalence2}, \ref{notgah}, \ref{relation}, \ref{mainthm}
and Remark \ref{last-rem}. Part of our proofs follows from the ideas used for the smooth case, in \cite{AGKM18,AK19,bergamasco2017existence}, which rely heavily on the use of cut-off functions. One of the difficulties of adapting such arguments is that in the analytic case there are no such functions. To overcome this problem we drew on ideas used in \cite{BNZ00} (to construct singular solutions) and in \cite{Sjostrand} (to analyze the asymptotic behavior of a sequence of integrals at infinity).

\bigskip
\section{Fourier analysis on $\mathbb{T}^1\times\mathbb{S}^3$}

In this section, we introduce the notations and recall the main results necessary for the development of this study, which can be found in reference \cite{livropseudo}.  

The three-dimensional sphere $\St$ is a Lie group with respect to the quaternionic product of $\mathbb{R}^{4}$, and it is isomorphic as a Lie group to the set of unitary $2\times 2$ matrices of determinant one $\mbox{SU}(2)$, with the usual matrix product. Let $\Sth$ be the unitary dual of $\St$, that is, the set of equivalence classes $[\textsf{t}^\ell]$ of continuous irreducible unitary representations $\textsf{t}^\ell:\St\to \C^{(2\ell+1)\times (2\ell+1)}$, $\ell\in\frac12\N_{0}$, of matrix-valued functions satisfying $\textsf{t}^\ell(xy) = \textsf{t}^\ell(x)\textsf{t}^\ell(y)$ and $\textsf{t}^\ell(x)^{*}=\textsf{t}^\ell(x)^{-1}$ for all
$x,y\in\St$.

We will use the standard convention of enumerating the matrix elements $\textsf{t}^\ell_{mn}$ of $\textsf{t}^\ell$ using indexes $m,n$ ranging from $-\ell$ to $\ell$ with step one, i.e. we have $-\ell\leq m,n\leq\ell$
with $\ell-m, \ell-n\in\N_{0}.$

The Fourier coefficient of a function $f\in C^{\infty}(\St)$, at $\ell\in\frac12\N_{0}$, is given by
$$
\widehat{f}(\ell)\doteq \int_{\St} f(x) \textsf{t}^{\ell}(x)^{*}dx \in {\C^{(2\ell+1)\times (2\ell+1)}},
$$
where the integral is taken with respect to the Haar measure on $\St$. This definition is naturally extended to distributions, and the Fourier series becomes
$$
f(x)=\sum_{\ell\in\frac12\N_{0}} (2\ell+1) \mbox{Tr}\left( \textsf{t}^\ell(x)\widehat{f}(\ell)\right),
$$
with the Plancherel's identity assuming the form
\begin{equation}\label{EQ:Plancherel}
\|f\|_{L^{2}(\St)}=\left(\sum_{\ell\in\frac12\N_{0}} (2\ell+1) 
	\|\widehat{f}(\ell)\|_{\HS}^{2}\right)^{1/2},
\end{equation}
where 
$ \|\widehat{f}(\ell)\|_{\HS}^{2}=\emph{\Tr}(\widehat{f}(\ell)\widehat{f}(\ell)^*)$ is the
Hilbert--Schmidt norm of the matrix $\widehat{f}(\ell)$.

Smooth functions and distributions on $\St$ can be characterized in terms of
their Fourier coefficients in the following way: 
$$
f\in C^{\infty}(\St)\Longleftrightarrow
\forall N \;\exists C_{N}>0 \textrm{ such that }
\|\widehat{f}(\ell)\|_{\HS}\leq C_{N} (1+\ell)^{-N}, \forall \ell\in\frac12\N_{0}
$$ 
and
$$
u\in  \mathcal{D}'(\St)
\Longleftrightarrow
\exists M\in\mathbb{N} \;\exists C>0 \textrm{ such that }
\|\widehat{u}(\ell)\|_{\HS}\leq C(1+\ell)^{M}, \forall \ell\in\frac12\N_{0}.
$$

Given an operator $T:C^{\infty}(\St)\to C^{\infty}(\St)$, we define its matrix symbol by
$$\sigma_{T}(x,\ell)\doteq \textsf{t}^\ell(x)^{*} (T \textsf{t}^\ell)(x)\in {\C^{(2\ell+1)\times (2\ell+1)}},$$ where
$T \textsf{t}^\ell$ means that we apply $T$ to the matrix components of $\textsf{t}^\ell(x)$.
In this case we have 
\begin{equation}\label{EQ:T-op}
Tf(x)=\sum_{\ell\in\frac12\N_{0}} (2\ell+1) \mbox{Tr}\left({\textsf{t}^\ell(x)\sigma_{T}(x,\ell)\widehat{f}(\ell)}\right).
\end{equation}

The correspondence between operators and symbols is one-to-one, and we will write $T_{\sigma}$ for the operator given by \eqref{EQ:T-op} corresponding to the symbol $\sigma(x,\ell)$.
The properties and corresponding symbolic calculus of the quantization \eqref{EQ:T-op} were extensively studied in \cite{livropseudo,RT13}.

For each $f \in L^1(\mathbb{T}^1\times \St)$ and $\ell \in \tfrac{1}{2}\N_0$, we define the $mn$--component of the partial Fourier coefficient of $f$ with respect to the $x$ variable as
$$
\widehat{f}(t,\ell)_{mn} = \int_{\St} f(t,x) \overline{\textsf{t}^{\ell}(x)_{nm}}\, dx \in L^1(\mathbb{T}^1)
$$
and, for each $k\in\Z$, we denote by $\doublehat{\,f\,}(k,\ell)_{mn}$ the $k$--th Fourier coefficient of the function  $\widehat{f}(\cdot,\ell)_{mn} $.

We have the following characterizations of the spaces $C^\infty(\mathbb{T}^1\times\St)$, $\mathcal{D}'(\mathbb{T}^1\times\St)$ and $C^\omega(\mathbb{T}^1\times\St)$ (see \cite{KMR19d} and \cite{KMR19}).

\begin{prop}\label{caracfunc}
Let $\{\widehat{f}(\:\cdot \:, \ell)_{mn} \}$ be a sequence of functions on $\mathbb{T}^1$ and define
	$$
	f(t,x) \doteq  \sum_{\ell\in\tfrac{1}{2}\N_0} (2\ell+1) \sum_{m,n}\widehat{f}(t, \ell)_{mn}\textsf{t}^{\ell}(x)_{nm}, \ (t,x)\in \mathbb{T}^1\times\St.
	$$ 
	Then $f \in C^\infty(\mathbb{T}^1\times\St)$ if and only if $\widehat{f}(\: \cdot\:,\ell)_{mn} \in C^\infty(\mathbb{T}^1)$, for all $\ell\in\tfrac{1}{2}\N_0$, $-\ell \leq m,n \leq \ell$ and for every multi-index $\beta$ and $N>0$, there exists $C_{\beta N} > 0$ such that
	\begin{equation*}\label{carac}
	\bigl|\partial^\beta \widehat{f}(t,\ell)_{mn}\bigl| \leq C_{\beta N} (1+\ell)^{-N}, \quad \forall t\in \mathbb{T}^1, \ \ell\in\tfrac{1}{2}\N_0, \ -\ell\leq m,n \leq \ell.
	\end{equation*}
\end{prop}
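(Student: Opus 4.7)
My proof would follow the standard two-sided route: one direction estimates the series term by term using polynomial bounds on derivatives of matrix coefficients, and the other uses integration by parts against the Laplace-Beltrami operator on $\St$.

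For sufficiency ($\Leftarrow$), assume the decay condition holds. Since $\textsf{t}^\ell$ is unitary, $|\textsf{t}^\ell(x)_{nm}| \leq 1$ for every $x \in \St$. Moreover, any left-invariant differential operator on $\St$ of order $k$ acts on a matrix coefficient $\textsf{t}^\ell(x)_{nm}$ by producing a linear combination (in the indices) of entries of $\textsf{t}^\ell(x)$ with coefficients controlled by $C_k(1+\ell)^k$; this is a standard fact from the symbolic calculus of \cite{livropseudo}. Combined with the hypothesis that every $t$-derivative of $\widehat{f}(t,\ell)_{mn}$ decays faster than any power of $\ell$, the polynomial factor $(2\ell+1)^3$ coming from the weight $(2\ell+1)$ and the $(2\ell+1)^2$ index pairs $(m,n)$ is absorbed, so the formal series for any mixed derivative $\partial_t^\beta \partial_x^\alpha f$ converges absolutely and uniformly on $\TS$; hence $f \in C^\infty(\TS)$.

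For necessity ($\Rightarrow$), I start from
$$\widehat{f}(t,\ell)_{mn} = \int_{\St} f(t,x)\,\overline{\textsf{t}^\ell(x)_{nm}}\, dx,$$
so the smoothness of $f$ and differentiation under the integral sign yield $\widehat{f}(\cdot,\ell)_{mn} \in C^\infty(\mathbb{T}^1)$ and
$$\partial_t^\beta\widehat{f}(t,\ell)_{mn} = \int_{\St} \partial_t^\beta f(t,x)\,\overline{\textsf{t}^\ell(x)_{nm}}\, dx.$$
Exploiting that $\textsf{t}^\ell(x)_{nm}$ is an eigenfunction of the Laplace-Beltrami operator $\mathcal{L}_{\St}$ with eigenvalue $-\ell(\ell+1)$, I apply the self-adjoint operator $(I-\mathcal{L}_{\St})^N$ under the integral to obtain
$$(1+\ell(\ell+1))^N\,\partial_t^\beta \widehat{f}(t,\ell)_{mn} = \int_{\St}(I-\mathcal{L}_{\St})^N \partial_t^\beta f(t,x)\,\overline{\textsf{t}^\ell(x)_{nm}}\, dx,$$
whose modulus is at most the $L^\infty$ norm of the smooth function $(I-\mathcal{L}_{\St})^N \partial_t^\beta f$, a finite constant $C_{\beta,N}$. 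Dividing by $(1+\ell(\ell+1))^N$ and noting that $N$ is arbitrary gives the desired decay.

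The main technical point is the polynomial-in-$\ell$ control of the derivatives of the matrix coefficients $\textsf{t}^\ell_{nm}$ used in the sufficiency step; however, this is a direct consequence of the symbolic calculus on compact Lie groups developed in \cite{livropseudo}, and no new analysis is required beyond invoking it.
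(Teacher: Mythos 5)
Your argument is correct, and it is essentially the standard proof: the paper itself does not prove Proposition \ref{caracfunc} but refers to \cite{KMR19d,KMR19}, where the characterization is obtained exactly along your lines (unitarity of $\textsf{t}^\ell$ plus polynomial-in-$\ell$ bounds on derivatives of matrix coefficients for sufficiency, and integration by parts against powers of $I-\mathcal{L}_{\St}$ using the eigenvalue $-\ell(\ell+1)$ for necessity). No gaps worth flagging beyond the routine justification of term-by-term differentiation, which your uniform absolute convergence of the differentiated series already covers.
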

\begin{prop}\label{caracdist}
Let $\bigl\{\widehat{u}(\: \cdot \:,\eta)_{rs} \bigr\}$ be a sequence of distributions on $\mathbb{T}^1$ and define 
\begin{equation*}
	u=\sum_{\ell\in\tfrac{1}{2}\N_0} (2\ell+1) \sum_{m,n}\widehat{u}(\:\cdot\:,\eta)_{mn}\textsf{t}^{\ell}_{nm}.
\end{equation*} 
	Then $u \in \mathcal{D}'(\mathbb{T}^1\times \St)$ if and only if there are $K \in \N$ and $C>0$ such that
	\begin{equation}\label{pk}
	\bigl| \left\langle \widehat{u}(\cdot,\ell)_{mn},\varphi\right\rangle \bigr| \leq C \, p_K(\varphi) (\ell+1)^{K}, 
	\end{equation} 
	for all $\varphi \in C^{\infty}(\mathbb{T}^1)$ and $\ell\in\tfrac{1}{2}\N_0$, where $p_K(\varphi) \doteq  \sum\limits_{\beta \leq K} \left\|\partial^\beta\varphi\right\|_{L^\infty(\mathbb{T}^1)}.$
\end{prop}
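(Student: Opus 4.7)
The plan is to exploit the identity
$$
\langle \widehat{u}(\cdot, \ell)_{mn}, \varphi\rangle = \langle u, \varphi(t)\overline{\textsf{t}^{\ell}(x)_{nm}}\rangle, \qquad \varphi\in C^\infty(\mathbb{T}^1),
$$
which recovers the partial Fourier coefficients from $u$ itself, together with the polynomial bound
$$
\|\partial^{\alpha}_{x}\textsf{t}^{\ell}_{nm}\|_{L^\infty(\St)} \leq C_{\alpha}(1+\ell)^{|\alpha|}, \qquad -\ell\leq m,n\leq\ell,
$$
which follows from the fact that each matrix entry $\textsf{t}^{\ell}_{nm}$ is an eigenfunction of the Laplace--Beltrami operator on $\St$ with eigenvalue $\ell(\ell+1)$.

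For the necessity direction I would invoke the finite order of distributions on the compact manifold $\mathbb{T}^1\times\St$: there exist $N\in\N$ and $C>0$ such that $|\langle u, \phi\rangle| \leq C \sum_{|\gamma|\leq N}\|\partial^{\gamma}\phi\|_{L^\infty(\mathbb{T}^1\times\St)}$ for every test function $\phi$. Applying this to $\phi(t,x)=\varphi(t)\overline{\textsf{t}^{\ell}(x)_{nm}}$ and estimating derivatives by Leibniz together with the growth bound above produces \eqref{pk} with $K=N$.

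For the converse direction the plan is to use \eqref{pk} to \emph{define} the action of $u$ on an arbitrary $\phi\in C^\infty(\mathbb{T}^1\times\St)$ by
$$
\langle u, \phi\rangle \doteq \sum_{\ell\in\tfrac{1}{2}\N_0}(2\ell+1)\sum_{m,n}\langle \widehat{u}(\cdot,\ell)_{mn}, \Phi^{mn}_{\ell}\rangle, \quad \Phi^{mn}_{\ell}(t)\doteq \int_\St \phi(t,x)\textsf{t}^{\ell}(x)_{nm}\,dx,
$$
and to verify that this series converges and depends continuously on $\phi$. Since $\partial_t^{\beta}\Phi^{mn}_{\ell}(t)$ is the $\textsf{t}^{\ell}_{nm}$-partial Fourier coefficient of $\partial_t^{\beta}\phi$, Proposition \ref{caracfunc} yields $p_{K}(\Phi^{mn}_{\ell})=O((1+\ell)^{-M})$ for every $M$. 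Combined with \eqref{pk} the general term of the series is $O((1+\ell)^{3+K-M})$, after accounting for the Plancherel weight $(2\ell+1)$ and the $(2\ell+1)^{2}$ terms in the $(m,n)$-sum; choosing $M$ large enough gives absolute convergence and continuity in a suitable $C^{N_0}$-seminorm. A short calculation on functions of product type $\phi(t,x)=\varphi(t)\textsf{t}^{\ell_0}(x)_{n_0 m_0}$, using the orthogonality of the matrix entries, then confirms that the distribution so constructed has the prescribed partial Fourier coefficients $\widehat{u}(\cdot, \ell)_{mn}$.

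The main obstacle is purely quantitative bookkeeping: one must carefully track the interplay between the order $K$ in \eqref{pk}, the cubic algebraic weight $(2\ell+1)^{3}$ coming from the Plancherel prefactor and the matrix-entry sum, and the polynomial growth of $x$-derivatives of $\textsf{t}^{\ell}_{nm}$, in order to absorb everything into the rapid decay of $\Phi^{mn}_{\ell}$ supplied by Proposition \ref{caracfunc}. Once this balance is set up correctly, neither direction of the equivalence presents a conceptual difficulty.
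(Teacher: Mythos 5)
Your argument is correct: recovering $\langle \widehat{u}(\cdot,\ell)_{mn},\varphi\rangle$ as $\langle u,\varphi\otimes\overline{\textsf{t}^{\ell}_{nm}}\rangle$ and using the finite order of $u$ together with the polynomial growth of derivatives of the matrix entries gives necessity, while for sufficiency the rapid decay of the partial coefficients of a test function (Proposition \ref{caracfunc}, or integration by parts with the Laplacian) absorbs the $(2\ell+1)^{3}(1+\ell)^{K}$ weight and yields convergence, continuity, and the right coefficients via Peter--Weyl orthogonality. This is essentially the same duality argument as in the works \cite{KMR19d,KMR19} that the paper cites for this proposition (no proof is given here); the only slack is cosmetic --- e.g.\ the Sobolev/eigenfunction route gives $\|\partial^{\alpha}_{x}\textsf{t}^{\ell}_{nm}\|_{L^\infty}\lesssim(1+\ell)^{|\alpha|+3/2}$ rather than $(1+\ell)^{|\alpha|}$, and your $\Phi^{mn}_{\ell}$ uses $\textsf{t}^{\ell}_{nm}$ instead of its conjugate --- neither of which affects the polynomial bookkeeping.
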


\begin{prop}\label{functionanaly} Let $f \in C^\infty(\mathbb{T}^1\times \St)$. We have that $f\in C^\omega(\mathbb{T}^1\times \St)$ if and only if $\widehat{f}(\cdot,\ell)_{mn} \in C^\omega(\mathbb{T}^1)$ for every $\ell \in \tfrac{1}{2}\N_0$, $-\ell \leq m,n \leq \ell$ and there are $h,C,B>0$ such that
$$|\partial^\beta\widehat{f}(t,\ell)_{mn}| \leq Ch^\beta \beta! \, e^{-B \ell},$$
for all multi-index $\beta$, $t \in \mathbb{T}^1$, $\ell \in \tfrac{1}{2}\N_0$, $-\ell \leq m,n \leq \ell$. 
\end{prop}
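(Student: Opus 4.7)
The plan is to derive both implications from the Fourier series representation together with the fact that the matrix coefficients $\textsf{t}^\ell_{mn}$ are joint eigenfunctions of the Casimir operator $\mathcal{L}_{\St}$ on $\St$, satisfying $\mathcal{L}_{\St}\,\textsf{t}^\ell_{mn}=\ell(\ell+1)\,\textsf{t}^\ell_{mn}$ and $|\textsf{t}^\ell_{mn}(x)|\leq 1$. I will use the standard equivalent characterization that, since $\TS$ is a compact real analytic manifold, a smooth function $g$ on $\TS$ is analytic if and only if there exist $C_0,h_0>0$ with
\[
\|\partial_t^\beta X^\gamma g\|_{L^\infty(\TS)}\leq C_0\,h_0^{\beta+|\gamma|}(\beta+|\gamma|)!
\]
for every $\beta\in\N_0$ and every ordered product $X^\gamma=X_{i_1}\cdots X_{i_{|\gamma|}}$ of left-invariant vector fields on $\St$.

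For necessity, assume $f\in C^\omega(\TS)$. Integration by parts in the $\St$-variable, combined with the eigenfunction property, yields
\[
[\ell(\ell+1)]^N\,\partial_t^\beta \widehat{f}(t,\ell)_{mn} = \widehat{\partial_t^\beta \mathcal{L}_{\St}^N f}(t,\ell)_{mn}
\]
for every $N\in\N_0$. The bound $|\textsf{t}^\ell_{mn}|\leq 1$ forces $|\widehat{g}(t,\ell)_{mn}|\leq \|g(t,\cdot)\|_{L^1(\St)}$, so applying the analyticity estimate to $g=\partial_t^\beta\mathcal{L}_{\St}^N f$ and using $(2N+\beta)!\leq 2^{2N+\beta}(2N)!\beta!$ gives
\[
\ell^{2N}|\partial_t^\beta \widehat{f}(t,\ell)_{mn}| \leq C_0\,h_0^{2N+\beta}(2N+\beta)! \leq C_0\,(2h_0)^\beta\beta!\,(4h_0^2)^N (2N)!.
\]
Choosing $N=\lfloor \ell/(ch_0)\rfloor$ for a suitable $c>0$ and applying Stirling's formula collapses the $N$-dependent factor into $e^{-B\ell}$ for some $B>0$ independent of $(t,\beta,m,n)$, yielding the required estimate.

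For sufficiency, the hypothesis immediately implies that each $\widehat{f}(\cdot,\ell)_{mn}\in C^\omega(\mathbb{T}^1)$, and that the Fourier series can be differentiated term by term:
\[
\partial_t^\beta X^\gamma f(t,x) = \sum_{\ell\in\tfrac12\N_0}(2\ell+1)\sum_{m,n}\partial_t^\beta\widehat{f}(t,\ell)_{mn}\,X^\gamma\textsf{t}^\ell(x)_{nm}.
\]
Writing the action of the left-invariant vector fields at the Lie algebra level gives $X^\gamma \textsf{t}^\ell_{mn}=\bigl(\textsf{t}^\ell\cdot d\textsf{t}^\ell(X_{i_1})\cdots d\textsf{t}^\ell(X_{i_{|\gamma|}})\bigr)_{mn}$, and since each $d\textsf{t}^\ell(X_{i_j})$ has operator norm $O(\ell)$ on the $(2\ell+1)$-dimensional representation space, one obtains $\|X^\gamma\textsf{t}^\ell_{mn}\|_{L^\infty(\St)}\leq (2\ell+1)(C_1\ell)^{|\gamma|}$. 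Combining this with the hypothesis and the elementary bound $\sup_{\ell\geq 0}\ell^{|\gamma|}e^{-B\ell/2}\leq h_2^{|\gamma|}|\gamma|!$ (Stirling), while absorbing the polynomial factor $(2\ell+1)^4$ into the remaining exponential $e^{-B\ell/2}$, one obtains $|\partial_t^\beta X^\gamma f(t,x)|\leq C_2\,h_3^{\beta+|\gamma|}\beta!\,|\gamma|!\leq C_2\,h_3^{\beta+|\gamma|}(\beta+|\gamma|)!$, which is the required analyticity estimate.

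The main obstacle is the optimization of $N$ in the necessity direction that converts the polynomial-in-$N$ estimate $(4h_0^2)^N(2N)!/\ell^{2N}$ into a genuine exponential decay $e^{-B\ell}$; the remaining steps reduce to careful bookkeeping of the action of $\mathcal{L}_{\St}$ and of the left-invariant vector fields on matrix coefficients, all available from the quantization framework recalled earlier in this section.
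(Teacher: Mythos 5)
Your argument is correct, and it is essentially the route the paper relies on: the paper gives no in-text proof of this proposition, deferring to \cite{KMR19} and \cite{KMR19d}, where the characterization is obtained exactly as you do it — for necessity, hitting $f$ with powers of the Casimir/Laplacian of $\St$, using that $\textsf{t}^\ell_{mn}$ are eigenfunctions with eigenvalue $\sim\ell^2$ and $|\textsf{t}^\ell_{mn}|\le 1$, and optimizing over the power $N\sim\ell$ to convert factorial growth into $e^{-B\ell}$; for sufficiency, using $\|d\textsf{t}^\ell(X)\|=O(\ell)$, the hypothesis' exponential decay, and $\sup_\ell \ell^{|\gamma|}e^{-B\ell/2}\lesssim h^{|\gamma|}|\gamma|!$ to get the frame-derivative bounds $Ch^{\beta+|\gamma|}(\beta+|\gamma|)!$. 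Only minor bookkeeping is left implicit on your side: (i) expanding $\mathcal{L}_{\St}^N$ as a sum of $3^N$ products of $2N$ left-invariant fields costs an extra $3^N$, harmlessly absorbed into $h_0^{2N}$; (ii) the half-integers $\ell=0,\tfrac12$ must be treated trivially before dividing by $\ell^{2N}$, and $N=\lfloor\ell/(ch_0)\rfloor$ must be taken $\ge 1$ only for $\ell$ large, adjusting $C$ for the finitely many remaining $\ell$; (iii) the equivalence "$g\in C^\omega(\TS)$ iff $\|\partial_t^\beta X^\gamma g\|_{L^\infty}\le C_0h_0^{\beta+|\gamma|}(\beta+|\gamma|)!$ for an analytic frame" is the standard fact you invoke, and it is also the starting point of the cited references, so relying on it is consistent with the paper's level of detail. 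With these remarks your proof is complete and matches the intended one.
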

\begin{remark}\label{noderivative}
When the functions  $t\in\mathbb{T}^1\mapsto\widehat{f}(\cdot,\ell)_{mn}$ are real analytic, it follows from Cauchy's integral formula that it is enough to obtain estimates for $\beta=0$ in the last proposition above.	
\end{remark}

\begin{defi}
	An operator $P:\mathcal{D}' (\mathbb{T}^1\times \St) \to \mathcal{D}' (\mathbb{T}^1\times \St)$ is said to be globally analytic hypoelliptic (GAH) if the conditions $u \in \mathcal{D}' (\mathbb{T}^1\times \St)$ and $Pu \in C^\omega(\mathbb{T}^1\times \St)$ imply that $u \in C^\omega(\mathbb{T}^1\times \St)$. 
	
	When the conditions $u \in \mathcal{D}'(\mathbb{T}^1\times \St)$ and $Pu \in C^\infty(\mathbb{T}^1\times \St)$ imply that $u \in C^\infty(\mathbb{T}^1\times \St),$ we say that  $P$ is globally hypoelliptic (GH).
	
\end{defi}

\section{Global analytic hypoellipticity and  analytic Diophantine conditions}\label{GAH+ADC}

We will begin the study of global analytic hypoellipticity on $\mathbb{T}^1 \times \St$ by addressing a class of constant-coefficient operators that will play an important role in the development of this article.

Let $c,q \in \C$ be constants, $X$ be a normalized smooth vector field on $\St$ and consider the operator $L=\partial_t + c X+q$ defined on $\mathbb{T}^1 \times \St$.

 By using rotations on $\St$, without loss of generality, we may assume that the vector field $X$ is the operator $\partial_0$ that has the symbol 
$$
\sigma_{\partial_0}(\ell)_{mn}=im\delta_{mn}, \quad \ell \in \tfrac{1}{2}\N_0, \ -\ell\leq m,n\leq \ell, \ \ell-m, \ell-n \in \N_0,
$$ 
where $\delta_{mn}$ is the Kronecker's delta (see \cite{livropseudo}, \cite{RT13}, and \cite{RTW14}).  

Thus, given $c,q \in \C$, we will consider the following operator defined on $\TS$
\begin{equation}\label{Lq}
  L = \partial_t + c \partial_0+q.
\end{equation}

Now, taking the Fourier coefficients separately in each variable in $Lu=f$ (see \cite{KMR19} for more details), we have
$$	i(k+c m-iq) \doublehat{\, u \,}\!(k,\ell)_{mn} = \doublehat{\,f\,}\!(k,\ell)_{mn},	$$
where $k \in \Z$, $\ell \in \frac{1}{2}\N_0$, $-\ell \leq m,n \leq \ell$ and $\ell-m, \ell-n \in \N_0$. 

\medskip
Then we are lead to consider the following set:
\begin{equation*}\label{setN}
\mathcal{N} = \left\{(k,\ell) \in \Z\times \tfrac{1}{2}\N_0; k+cm-iq=0, \textrm{ for some } -\ell \leq m \leq \ell, \ \ell-m\in\N_0 \right\}.
\end{equation*}

\begin{prop}\label{Ninfinite} 
	If the set $\mathcal{N}$ has infinitely many elements, then the operator $L$ defined in \eqref{Lq} is not (GAH).
\end{prop}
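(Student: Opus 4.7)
The plan is to produce a distribution $u \in \mathcal{D}'(\TS)$ with $Lu \equiv 0$, which is automatically in $C^\omega(\TS)$, yet with $u \notin C^\omega(\TS)$. The idea is to concentrate $u$ on exactly the modes where the Fourier multiplier $i(k+cm-iq)$ of $L$ vanishes; these are precisely the modes indexed by $\mathcal{N}$.

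First I would extract from $\mathcal{N}$ a sequence $(k_j,\ell_j)$ with $\ell_j\to\infty$: for each fixed $\ell$ the index $m$ ranges over at most $2\ell+1$ admissible values, and for each such $m$ there is at most one integer $k$ solving $k+cm-iq=0$, so each horizontal slice of $\mathcal{N}$ is finite. For each such pair I pick $m_j$ realizing the equation and set $n_j=m_j$ (any admissible $n$ works). I would then define $u$ by its partial Fourier coefficients
$$\widehat{u}(t,\ell)_{mn} = \begin{cases} e^{ik_j t}, & (\ell,m,n)=(\ell_j,m_j,n_j), \\ 0, & \text{otherwise}. \end{cases}$$
Proposition \ref{caracdist} then shows $u\in\mathcal{D}'(\TS)$: for any $\varphi\in C^\infty(\mathbb{T}^1)$ the pairing satisfies $|\langle\widehat{u}(\cdot,\ell_j)_{m_j n_j},\varphi\rangle| = 2\pi|\widehat{\varphi}(-k_j)| \leq 2\pi\, p_0(\varphi)$, uniformly in $\ell$, so \eqref{pk} holds with $K=0$.

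Next I would apply $L$ termwise, which is justified by continuity of $L$ on $\mathcal{D}'(\TS)$ together with the distributional convergence of the defining series. Since $\partial_0 \textsf{t}^\ell(x)_{nm}=im\,\textsf{t}^\ell(x)_{nm}$, the partial Fourier symbol of $L$ at level $(\ell,m)$ is $\partial_t+icm+q$, and applied to $e^{ik_j t}$ at mode $(\ell_j,m_j)$ this yields the factor $i(k_j+cm_j-iq)=0$ by the definition of $\mathcal{N}$. Thus $Lu=0\in C^\omega(\TS)$. On the other hand, Proposition \ref{functionanaly} combined with Remark \ref{noderivative} would force $|\widehat{u}(t,\ell_j)_{m_j n_j}|\leq Ce^{-B\ell_j}$ for some $B>0$ if $u$ were analytic, but this coefficient has modulus $1$ while $\ell_j\to\infty$, a contradiction. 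Hence $u\notin C^\omega(\TS)$ and $L$ fails to be (GAH). I do not foresee any serious obstacle; the only points that deserve care are the admissibility of the chosen triples $(\ell_j,m_j,n_j)$ and the legitimacy of the termwise action of $L$ on the distributional series.
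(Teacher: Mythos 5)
Your construction is essentially the paper's own proof: both place a nonzero coefficient $e^{ikt}$ (equivalently, double Fourier coefficient $1$) exactly on the resonant modes of $\mathcal{N}$, verify via Proposition \ref{caracdist} that this yields a distribution solving $Lu=0$, and conclude $u\notin C^\omega(\TS)$ from the failure of exponential decay in Propositions \ref{caracfunc} and \ref{functionanaly}. Your extra remarks (finiteness of each $\ell$-slice of $\mathcal{N}$, hence $\ell_j\to\infty$, and the termwise action of $L$) only make explicit points the paper leaves implicit, so the argument is correct and follows the same route.
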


\begin{proof} Consider the sequence
$$\doublehat{\,u\,}(k,\ell)_{mn} = \left \{\begin{array}{cl} 1, & \textrm{if } k+{c}m -iq=0, \\ 0, & \textrm{ in any other case.} \end{array} \right. $$
	
Since $|\doublehat{\,u\,}(k,\ell)_{mn}|\leq 1$, by Proposition \ref{caracdist}, this sequence defines a distribution given by $$u(t,x)=\sum_{\ell} (2\ell+1) \sum_{m,n}\widehat{u}(t,\ell)_{mn}\textsf{t}^{\ell}_{nm} \in \mathcal{D}'(\mathbb{T}^1 \times \St),$$ 
which satisfies the equation $L u=0$.

The elements of this sequence do not, however, decay as required in Propositions \ref{caracfunc} and \ref{functionanaly}, because the set $\mathcal{N}$ has infinitely many elements, therefore $u \notin C^\infty(\mathbb{T}^1\times \St)$ and $L$ not (GAH).

\end{proof}

\begin{remark}\label{finiteimpliesempty} If $\mathcal{N} \neq \varnothing$, then $\mathcal{N}$ is infinite. 
	
	Indeed, if $k+cm-iq=0$, for some $(k,\ell_0) \in \mathbb{Z}\times \frac{1}{2}\mathbb{N}_0$ and  $-\ell_0\leq m \leq \ell_0$ then, by the disposition of the indexes on the matrix representations $\textsf{t}^{\ell}$, the same index $m$ will appear in $\ell+n$, for all $n \in \N$, and this means that we have $(k,\ell+n) \in \mathcal{N}$, for all $n\in \N$.   
\end{remark}

Next, we present a Diophantine condition like and the characterization of the global analytic hypoellipticity of the operator $L$, which can be obtained from Theorem 6.1 in \cite{KMR19c}.

\begin{defi} We say that the operator $L$ satisfies the analytic Diophantine condition (ADC)  if 
	for all $B>0$ there exists a constant $K_B>0$ such that for all $k\in \Z$, $\ell \in \tfrac{1}{2}\N_0$, $-\ell \leq m \leq \ell$, $\ell-m \in \N_0$, 
\begin{equation}\label{ADC} 
	|k+cm-iq| \geq K_Be^{-B(k+\ell+1)}, \tag{ADC}
\end{equation}
whenever $k+cm-iq \neq 0$.
\end{defi}

\begin{prop}\label{gahconst}
The operator $L=\partial_t+c\partial_0+q$ is (GAH) if and only if the set $\mathcal{N}$ is finite and $L$ satisfies the  \eqref{ADC}  condition.
\end{prop}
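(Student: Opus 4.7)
The proof is an iff, so I would split into the two directions, using the explicit formula $i(k+cm-iq)\doublehat{u}(k,\ell)_{mn} = \doublehat{f}(k,\ell)_{mn}$ that was already derived before the statement.

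For sufficiency, assume $\mathcal{N}$ is finite, which by Remark \ref{finiteimpliesempty} means $\mathcal{N} = \varnothing$, and that \eqref{ADC} holds. Given $u \in \mathcal{D}'(\TS)$ with $f = Lu \in C^\omega(\TS)$, the relation above lets me divide for every admissible $(k,\ell,m,n)$, so
$$\doublehat{u}(k,\ell)_{mn} = \frac{\doublehat{f}(k,\ell)_{mn}}{i(k+cm-iq)}.$$
By Proposition \ref{functionanaly} applied to $f$ (together with analyticity in $t$, which yields exponential decay in $|k|$ via the one-dimensional Paley--Wiener statement), there exist $C,\delta>0$ such that $|\doublehat{f}(k,\ell)_{mn}|\leq C e^{-\delta(|k|+\ell)}$. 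Choosing $B = \delta/2$ in \eqref{ADC} and combining,
$$|\doublehat{u}(k,\ell)_{mn}| \leq \frac{C}{K_B}\, e^{B}\, e^{-(\delta/2)(|k|+\ell)}.$$
Summing over $k$ gives the uniform bound $|\widehat{u}(t,\ell)_{mn}|\leq C' e^{-(\delta/2)\ell}$. Since $\widehat{u}(\cdot,\ell)_{mn}$ is then a uniformly convergent series of analytic functions with exponentially decaying coefficients, it is real analytic in $t$, and by Remark \ref{noderivative} this estimate at $\beta=0$ is enough to conclude $u \in C^\omega(\TS)$ via Proposition \ref{functionanaly}.

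For necessity I would argue by contraposition. If $\mathcal{N}$ is infinite, Proposition \ref{Ninfinite} already provides a distributional solution of $Lu=0$ that is not smooth (hence not analytic), so $L$ is not (GAH). Assume instead that $\mathcal{N}$ is finite but that \eqref{ADC} fails: there is some $B_0 > 0$ so that for every $j \in \N$ one can pick $(k_j,\ell_j,m_j)$ with $k_j + cm_j - iq \neq 0$ and
$$0 < |k_j + cm_j - iq| < \tfrac{1}{j}\, e^{-B_0(|k_j|+\ell_j+1)}.$$
A small compactness argument shows $|k_j|+\ell_j \to \infty$: if the triples stayed in a bounded set, the nonzero values $|k+cm-iq|$ there would admit a positive lower bound, contradicting the inequality for large $j$. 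Fix a valid column index, say $n_j = m_j$, and define
$$\doublehat{u}(k,\ell)_{mn} = \begin{cases} 1, & (k,\ell,m,n) = (k_j,\ell_j,m_j,n_j) \text{ for some } j, \\ 0, & \text{otherwise.} \end{cases}$$
Since these coefficients are bounded by $1$, Proposition \ref{caracdist} yields $u \in \mathcal{D}'(\TS)$, but the non-decay along the subsequence $(k_j,\ell_j)\to\infty$ prevents $u$ from being analytic. On the other hand, the only nonzero Fourier coefficients of $Lu$ are $i(k_j+cm_j-iq)$ at $(k_j,\ell_j,m_j,n_j)$, and these are bounded by $e^{-B_0(|k_j|+\ell_j+1)}$, hence decay exponentially in $|k|+\ell$. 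This gives $Lu \in C^\omega(\TS)$ via Proposition \ref{functionanaly}, showing $L$ is not (GAH).

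The main obstacle is the necessity direction, specifically engineering a true distributional solution out of the failure of \eqref{ADC}: one needs the selected subsequence $(k_j,\ell_j)$ to escape to infinity (so that non-decay really prevents analyticity) while simultaneously controlling the image under $L$ to guarantee analyticity of $Lu$, both using only a single parameter $B_0$ coming from the negation of the quantifier in \eqref{ADC}. The sufficiency direction is essentially a routine division-in-frequency argument, the only care needed being to translate analyticity of $f$ on $\TS$ into joint exponential decay in $|k|$ and $\ell$ so that the loss $e^{B(|k|+\ell+1)}$ from \eqref{ADC} can be absorbed.
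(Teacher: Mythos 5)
Your argument is correct, but it does not follow the paper's route, because the paper gives no proof of Proposition \ref{gahconst} at all: it simply notes that the characterization can be obtained from Theorem 6.1 of \cite{KMR19c}, a general result on global properties of vector fields (in Komatsu classes, which contain the analytic class) on compact Lie groups. You instead give a direct, self-contained partial-Fourier proof on $\TS$: for sufficiency, division of $\doublehat{\,f\,}(k,\ell)_{mn}$ by $i(k+cm-iq)$, absorption of the loss $e^{B(|k|+\ell+1)}$ coming from \eqref{ADC} into the exponential decay of the coefficients of $f$, and then Remark \ref{noderivative}; for necessity, Proposition \ref{Ninfinite} when $\mathcal{N}$ is infinite, and otherwise a singular solution whose coefficients equal $1$ along a sequence witnessing the failure of \eqref{ADC}, so that $Lu$ has exponentially small coefficients while $u$ does not decay. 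This mirrors the techniques the paper itself uses elsewhere (Proposition \ref{Ninfinite}, Theorem \ref{Lisgah}, Lemma \ref{equivalence}); what your approach buys is elementarity and self-containedness in this specific setting, while the citation buys the statement for all compact Lie groups and all Komatsu classes at once. Two steps you compress deserve a line each if written out: (i) the passage from Proposition \ref{functionanaly} to the joint bound $|\doublehat{\,f\,}(k,\ell)_{mn}|\leq Ce^{-\delta(|k|+\ell)}$ (Cauchy estimates or Paley--Wiener in $t$), and conversely from the exponential decay of $\doublehat{\,u\,}(k,\ell)_{mn}$ back to the hypotheses of Proposition \ref{functionanaly}, which uses that the decay rate, hence the width of the strip of holomorphy in $t$, is uniform in $\ell$ so that Remark \ref{noderivative} applies; (ii) in the necessity construction, if several $k_j$ share the same $\ell_j$ one should note that $\widehat{u}(\cdot,\ell)_{mn}=\sum_j e^{ik_jt}$ is still a periodic distribution satisfying \eqref{pk}, and that non-decay along $|k_j|+\ell_j\to\infty$ already contradicts smoothness (hence analyticity) of $u$, whether it is $\ell_j$ or $|k_j|$ that escapes to infinity. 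Both points are routine and do not affect the correctness of your proof.
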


The next lemma gives us two equivalent forms of the  \eqref{ADC} condition, which will be useful to construct examples of global analytic hypoelliptic operators.

\begin{lemma}\label{diophantine2}
	The following analytic Diophantine conditions are equivalent to the \eqref{ADC}  condition:
	\begin{enumerate}[(i)]
		\item for all $B>0$, there is $K_B>0$ such that for all $k \in \Z$ and $\ell \in \tfrac{1}{2}\Z$,
		\begin{equation}\label{ADC2}
		|k+c\ell-iq| \geq K_B e^{-B(|k|+|\ell|)}, \tag{ADC$_2$}
		\end{equation}
		whenever $k+c\ell -iq \neq 0$.
		\item	 for all $B>0$, there is  $K_B>0$ such that for all $k, \ell \in \Z$,
		\begin{equation}\label{ADC3}
		\left|k+\tfrac{c}{2}\ell-iq\right| \geq K_B e^{-B(|k|+|\ell|)},  \tag{ADC$_3$}
		\end{equation}
		whenever $k+\tfrac{c}{2}\ell -iq \neq 0$.
	\end{enumerate}
\end{lemma}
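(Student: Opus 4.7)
The plan is to deduce both equivalences by direct reparametrization of the index set. Each of the three conditions asserts essentially the same lower bound on $|k+\tfrac{c}{2}n-iq|$ with $n\in\Z$, differing only in how the index $n$ is packaged and in the form of the exponential weight. Because every statement is quantified over all $B>0$, any bounded multiplicative rescaling of the exponent can be absorbed into $K_B$, so the proof reduces to careful bookkeeping.

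For (ADC) $\Leftrightarrow$ (ADC$_2$), the key observation is that the constraints $-\ell\leq m\leq\ell$ with $\ell-m\in\N_0$ force $m\in\tfrac{1}{2}\Z$, while every $s\in\tfrac{1}{2}\Z$ is admissible in (ADC) under the choice $\ell:=|s|$, $m:=s$. To derive (ADC$_2$) from (ADC), I apply (ADC) at such a triple, which produces the weight $K_B e^{-B(|k|+|s|+1)}$; the factor $e^{-B}$ coming from the ``$+1$'' is absorbed into the constant to recover (ADC$_2$). Conversely, for any admissible triple $(k,\ell,m)$, (ADC$_2$) applied at $(k,m)$, together with $|m|\leq\ell$, gives $e^{-B(|k|+|m|)}\geq e^{-B(|k|+\ell+1)}$ up to a bounded factor, yielding (ADC).

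For (ADC$_2$) $\Leftrightarrow$ (ADC$_3$), I exploit the bijection $\tfrac{1}{2}\Z\ni s\leftrightarrow \ell=2s\in\Z$, under which $cs=\tfrac{c}{2}\ell$ and $|s|=|\ell|/2$; so (ADC$_2$) rewrites as $|k+\tfrac{c}{2}\ell-iq|\geq K_B e^{-B(|k|+|\ell|/2)}$. Since $|\ell|/2\leq|\ell|$, this already implies (ADC$_3$) with the same constants. For the converse, I apply (ADC$_3$) with $B$ replaced by $B/2$; the resulting weight $K_{B/2}\,e^{-(B/2)(|k|+|\ell|)}$ dominates $K_{B/2}\,e^{-B(|k|+|\ell|/2)}$ because $|k|/2\leq |k|$, which recovers (ADC$_2$). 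The only real obstacle throughout is verifying the admissibility of each chosen substitution and tracking the correct direction of each exponential inequality; no substantive analytic difficulty arises.
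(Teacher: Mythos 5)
Your proposal is correct and follows essentially the same route as the paper: both directions of each equivalence are obtained by the same substitutions (taking $\ell=|m|$, $m\in\tfrac{1}{2}\Z$ for (ADC)\,$\Leftrightarrow$\,(ADC$_2$), and the rescaling $\ell\leftrightarrow 2\ell$ for (ADC$_2$)\,$\Leftrightarrow$\,(ADC$_3$)), with bounded factors absorbed into $K_B$ and the halving of $B$ handling the factor $2$ in the exponent. Your bookkeeping of the ``$+1$'' in (ADC) and the explicit replacement of $B$ by $B/2$ is, if anything, slightly more careful than the paper's write-up, but the argument is the same.
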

\begin{proof}
	First, let us prove the equivalence between \eqref{ADC} and \eqref{ADC2}. Assume that \eqref{ADC} holds and let $\tau \in \Z$ and $\rho \in \tfrac{1}{2}\Z$ such that $\tau+c\rho-iq \neq 0$. Taking $k=\tau$, $\ell = |\rho|$ and $m=\rho$ in \eqref{ADC} we have
	$$
	|\tau+c\rho-iq| \geq K_Be^{-B(|\tau|+|\rho|)}.
	$$
	On the other hand, if \eqref{ADC2} holds, let $\tau \in \Z$, $\rho \in \tfrac{1}{2}\N_0$ and $-\rho \leq m \leq \rho$ such that $\rho-m\in\N_0$ and $\tau+cm-iq\neq0$. Taking $k=\tau$ and $\ell =m$ in \eqref{ADC2}, since $|m| \leq \rho$, we have
	$$
	|\tau+cm-iq| \geq K_Be^{-B(|\tau|+|m|)} \geq K_Be^{-B(|\tau|+\rho)}.
	$$
	
	Now, to prove that \eqref{ADC2} is equivalent to \eqref{ADC3},  assume that \eqref{ADC2} is true and let $\tau, \rho \in \Z$ such that $k+\tfrac{c}{2}\rho -iq\neq 0$. Taking $k=\tau$ and $\ell=\tfrac{\rho}{2}$ in \eqref{ADC2}, we have 
	$$
	|\tau + \tfrac{c}{2}\rho - iq| \geq K_Be^{-B(|\tau|+|\rho|/2)} \geq K_Be^{-B(|\tau|+|\rho|)}.
	$$
	
	Finally, assuming that \eqref{ADC3}  holds, let $\tau \in \Z$, $\rho \in \tfrac{1}{2}\Z$ such that $\tau+c\ell-iq\neq 0$. We can write $\rho=\tfrac{r}{2}$, for some $r \in \Z$. Taking $k=\tau$ and $\ell=r$ in \eqref{ADC3}, we have
	$$
	|\tau+c\rho-iq|=|\tau+\tfrac{c}{2}r -iq| \geq K_B e^{-B(|k|+|r|)} \geq K_Be^{-2B(|k|+|\rho|)}.
	$$
	
\end{proof}

\begin{ex} \label{ex_const-coef}
	Writing $c=a+ib \in \mathbb{C}$ we have 
  $$
  k+cm-iq = (k+am+{\rm Im}(q)) +i(mb-{\rm Re}(q)).
  $$

Thus, if $b\neq 0$ and\, ${\rm Re}(q)/ b\notin \tfrac{1}{2}\mathbb{Z}$, then 
$$|k+cm-iq| \geq  |b(m-{\rm Re}(q)/b)| \geq K = \mbox{constant}.$$ 
Hence, the set $\mathcal{N}$ is empty and the \eqref{ADC} condition  is satisfied, which implies, by Proposition \ref{gahconst}, that $L$ is (GAH). 

If $b\neq 0$ and $\emph{\textrm{Re}}(q)/b \in \tfrac{1}{2}\mathbb{Z}$ we have two cases to consider. When ${\rm Im}(q)+\emph{\textrm{Re}}(q)a/b \in \Z$ the set $\mathcal{N}$ has infinitely many elements and so, by Proposition \ref{Ninfinite}, $L$ is not (GAH). When $\emph{\textrm{Im}}(q)+\emph{\textrm{Re}}(q)a/b \not\in \Z$, the set $\mathcal{N}$ is empty and we have $\delta=\inf\limits_{k\in\Z}\{k+\emph{\textrm{Re}}(q)a/b+\emph{\textrm{Im}}(q)\}>0$. Hence,
$$
|k+cm-iq| \geq \max\{|b|, \delta\} =\mbox{constant} >0,
$$
which implies that $L$ is (GAH).

Similarly, if $b=0$ and ${\rm Re}(q) \neq0$ we obtain
$$|k+cm-iq| \geq  |{\rm Re}(q)| \geq K = \mbox{constant},$$ 
and so $L$ is (GAH).

However, when ${\rm Re}(q)= b = 0$, by the \eqref{ADC3} condition, the global analytic hypoellipticity of $L$ depends on the approximations $ \left|k+\tfrac{a}{2}\ell+{\rm Im}(q)\right|$, with $k,\ell\in \Z$. 

In the special case where $b = 0$ and $iq \in \mathbb{Z}+\tfrac{a}{2}\Z$, then 
$$\left|k+\tfrac{c}{2}\ell-iq\right| = \left|\widetilde{k}+\tfrac{a}{2}\widetilde{\ell}\right|,$$ 
with $\widetilde{k},\widetilde{\ell}\in \Z.$ 

Therefore, the operator $L = \partial_t+a \partial_0 + {q}$  is  (GAH)  if and only if $a$ is neither rational nor an exponential Liouville number.

Finally, when ${\rm Re}(q)= b = 0$ and $iq\notin \Z+\tfrac{a}{2}\Z$ the set $\mathcal{N}$ is empty  and we can also guarantee the existence of $K>0$ such that 
  $$
\left|k+\tfrac{a}{2}\ell+{\rm Im}(q)\right|\geq K = \mbox{constant},
$$
when $a \in \Q$ because, in this case, the set $\Z+\tfrac{a}{2}\Z$ is discrete. 

Then the \eqref{ADC3} condition is satisfied, which implies that $L$ is (GAH). On the other hand, for $a\in \R\setminus \Q$ the set $\Z+\tfrac{a}{2}\Z$ is dense in $\R$, so $L$ is (GAH) if and only if the \eqref{ADC3} condition is satisfied.
\end{ex}

By adapting ideas from \cite{AKM,AK19,Ber94,Ber99} it is possible to construct several other interesting examples of globally analytic hypoelliptic operators by choosing the complex numbers $c$ and $q$ conveniently.

\begin{lemma} \label{equivalence}The operator $L = \partial_t + {c} \partial_0 +q$ satisfies the \eqref{ADC} condition if and only if the following condition holds: for all $B>0$ there is a constant $C>0$ such that
	\begin{equation}\label{ADC4}
		|1-e^{\pm 2\pi (im{c} + q)}| \geq Ce^{-B\ell}, \tag{ADC$_4$}
	\end{equation}
for all $\ell \in \tfrac{1}{2}\N_0$, $-\ell \leq m \leq \ell$, $\ell-m\in\N_0$ such that $im{c} + q \notin i\mathbb{Z}$.
\end{lemma}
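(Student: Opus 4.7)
The strategy is to view \eqref{ADC} and \eqref{ADC4} as two expressions of the same underlying Diophantine condition on $cm - iq$: the former measures how close $cm - iq$ is to $-\Z \subset \C$, while the latter measures how close $e^{\pm 2\pi i(cm - iq)}$ is to $1$. These two quantities are comparable up to absolute constants when small, and the gap between the exponents $e^{-B(|k|+\ell+1)}$ and $e^{-B\ell}$ is bridged by noting that the best approximating integer is linearly controlled by $\ell$.

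Write $c = a + ib$ and $q = q_1 + iq_2$ with $a,b,q_1,q_2 \in \R$, and set $\alpha_m \doteq ma + q_2$, $\beta_m \doteq mb - q_1$, so that $cm - iq = \alpha_m + i\beta_m$ and $|k + cm - iq|^2 = (k + \alpha_m)^2 + \beta_m^2$. Let $k_m^* \in \Z$ denote the integer closest to $-\alpha_m$, with $\delta_m \doteq |k_m^* + \alpha_m| \leq 1/2$. Since $|m| \leq \ell$, one has $|k_m^*| \leq A(\ell+1)$ for a constant $A$ depending only on $a$ and $q_2$. Observe the exact identity $|1 - e^{-2\pi(imc+q)}| = e^{2\pi\beta_m}|1 - e^{2\pi(imc+q)}|$; in particular \eqref{ADC4} for both signs is equivalent to the inequality for the sign giving the smaller modulus (the $+$ sign when $\beta_m \geq 0$, and vice versa).

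The heart of the proof is the comparison: for $\theta_m \doteq cm - iq = \alpha_m + i\beta_m$, assuming WLOG $\beta_m \geq 0$, a direct expansion gives
\begin{equation*}
|1 - e^{2\pi i \theta_m}|^2 = (1 - e^{-2\pi\beta_m})^2 + 4e^{-2\pi\beta_m}\sin^2(\pi(\alpha_m + k_m^*)),
\end{equation*}
using $\sin(\pi\alpha_m) = \pm \sin(\pi(\alpha_m + k_m^*))$. Combined with the elementary bounds $2|t|/\pi \leq |\sin t| \leq |t|$ for $|t| \leq \pi/2$ and with $1 - e^{-x}$ comparable to $\min(x,1)$ for $x \geq 0$, this yields absolute constants $c_0,C_0 > 0$ such that
\begin{equation*}
c_0 \min(1, \delta_m^2 + \beta_m^2) \leq |1 - e^{2\pi i \theta_m}|^2 \leq C_0 (\delta_m^2 + \beta_m^2).
\end{equation*}

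For \eqref{ADC} $\Rightarrow$ \eqref{ADC4}: given $B > 0$, apply \eqref{ADC} at $k = k_m^*$ with parameter $B' \doteq B/(A+1)$ to obtain $\sqrt{\delta_m^2+\beta_m^2} \geq K_{B'} e^{-B(\ell+1)}$, and combine with the lower half of the comparison to conclude $|1 - e^{\pm 2\pi(imc+q)}| \geq C e^{-B\ell}$ for a suitable $C$. For \eqref{ADC4} $\Rightarrow$ \eqref{ADC}: given $B_0 > 0$, apply \eqref{ADC4} with $B = B_0$ to produce $C$; when $k = k_m^*$, the upper half of the comparison gives $\sqrt{\delta_m^2+\beta_m^2} \geq (C/\sqrt{C_0}) e^{-B_0\ell}$, which majorizes $K_{B_0} e^{-B_0(|k_m^*|+\ell+1)}$ with $K_{B_0} = C/\sqrt{C_0}$; when $k \neq k_m^*$, the triangle inequality yields $|k+\alpha_m| \geq 1/2$, hence $|k+cm-iq| \geq 1/2$, which trivially dominates the required exponential as soon as $K_{B_0} \leq 1/2$. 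The main obstacle I anticipate is the case analysis in $\beta_m$ (small, moderate, large) needed to extract clean absolute constants $c_0,C_0$ in the comparison; the remaining quantifier bookkeeping converting between $e^{-B\ell}$ and $e^{-B(|k|+\ell+1)}$ via $|k_m^*| \leq A(\ell+1)$ is routine.
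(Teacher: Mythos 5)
Your argument is correct, and it rests on the same underlying comparison as the paper's proof: $|1-e^{\pm 2\pi(imc+q)}|$ is comparable to $|\mathrm{Re}(q)-mb|+|k+ma+\mathrm{Im}(q)|$ for a well-chosen integer $k$, and the discrepancy between the exponents $e^{-B\ell}$ and $e^{-B(|k|+\ell+1)}$ is absorbed because that integer is linearly controlled by $\ell$. The execution, however, is genuinely different. The paper argues by contraposition in both directions: it extracts sequences along which one condition fails, notes that the relevant quantities tend to zero, and only then applies the Mean Value Theorem, so its comparison is asymptotic ("for $j$ big enough") and the quantifier bookkeeping is left largely implicit. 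You instead prove a uniform two-sided estimate valid for every $m$, via the exact identity $|1-re^{i\phi}|^2=(1-r)^2+4r\sin^2(\phi/2)$ with $r=e^{-2\pi|\beta_m|}$, organized around the nearest integer $k_m^*$; the $\min(1,\cdot)$ in the lower bound correctly handles large $|\beta_m|$, and the case analysis you flag (using, e.g., $\sin t\geq 2t/\pi$ on $[0,\pi/2]$ and $1-e^{-x}\geq xe^{-x}$ on $[0,2\pi]$) is indeed routine. What your route buys is a direct proof with explicit constants and a transparent conversion between the two exponential scales; what the paper's route buys is brevity, since it only needs the comparison in the regime where both quantities are small. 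Two small points you should make explicit in a final write-up, both of which do work out: when you apply \eqref{ADC} at $k=k_m^*$ you need $k_m^*+cm-iq\neq 0$, and when you apply \eqref{ADC4} at $k=k_m^*$ in the converse direction you need $imc+q\notin i\mathbb{Z}$; these are automatic because $imc+q\in i\mathbb{Z}$ is equivalent to $k_0+cm-iq=0$ for some integer $k_0$, which forces $\beta_m=0$, $\delta_m=0$ and $k_0=k_m^*$, i.e.\ exactly the excluded cases. Likewise, your reading of the $\pm$ (the bound for both signs is equivalent to the bound for the smaller modulus, via the exact factor $e^{2\pi\beta_m}$) is consistent with how the lemma is used in the proof of Theorem \ref{Lisgah}.
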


\begin{proof} If $L$ does not satisfy the \eqref{ADC4} condition, then there exist a constant $B>0$, a sequence $\{\ell_j \}_{j \in \N}$, and indexes $-\ell_j \leq m_j \leq \ell_j$ such that for all $j \in \mathbb{N}$
$$
0< |1-e^{\pm 2\pi (im_j{c} + q)}| < \tfrac{1}{j} e^{-B \ell_j}.
$$
In particular, for $c=a+ib$, with $a,b \in \mathbb{R}$, we have
$$e^{\pm 2\pi(im_j{c} +q)} = e^{2\pi(\emph{\emph{\mbox{Re}}}( q)-m_j{b})}\cdot e^{2\pi i (m_j{a}+\emph{\emph{\mbox{Im}}}(q))} \rightarrow 1, \mbox{ when } j \rightarrow \infty,$$
thus $|\emph{\emph{\mbox{Re}}}( q)-m_j{b}| \rightarrow 0$ and there is a sequence of integer numbers $({k}_j)$ such that $|{k}_j +m_j{a}+\emph{\emph{\mbox{Im}}}(q)| \rightarrow 0$, when $j \rightarrow \infty$. By the Mean Value Theorem we can choose $j$ big enough such that
$$
|1-e^{\pm 2\pi (im_j{c} + q)}| \geq |1-e^{2\pi(\emph{\emph{\mbox{Re}}}( q)-m_j{b})}| \geq e^{-1}2\pi |\emph{\emph{\mbox{Re}}}( q)-m_j{b}|
$$
and
$$
|\sin\left(2\pi({k}_j +m_j{a}+\emph{\emph{\mbox{Im}}}(q))\right)| \geq \pi |{k}_j +m_j{a}+\emph{\emph{\mbox{Im}}}(q)|,
$$
which implies that
\begin{align*} \pi |{k}_j +m_j{a}+\emph{\emph{\mbox{Im}}}(q)| & \leq  |\sin\left(2\pi({k}_j +m_j{a}+\emph{\emph{\mbox{Im}}}(q))\right)| \\
 & \leq  2e^{2\pi (\emph{\emph{\mbox{Re}}}( q)-m_j{b})} |\sin\left(2\pi({k}_j +m_j{a}+\emph{\emph{\mbox{Im}}}(q))\right)| \\ 
  & =  2| \emph{\emph{\mbox{Im}}} (1-e^{\pm 2\pi (im_j{c} + q)})| \\
  & \leq  2 |1-e^{\pm 2\pi (im_j{c} + q)}|.
\end{align*}

Therefore, there is a positive constant $C$ such that for $j$ big enough we have
$$0<|{k}_j + m_j{c} -iq| \leq |\emph{\emph{\mbox{Re}}}(q)-m_j{b}| + |{k}_j+m_j{a}+\emph{\emph{\mbox{Im}}}(q)| \leq \dfrac{C}{j}e^{-B \ell_j}.$$
 
From this we conclude that $L$ does not satisfy the \eqref{ADC} condition. 

On the other hand, assume now that $L$ does not satisfy the  \eqref{ADC} condition, so there is a positive constant $B>0$, a sequence  $({k}_j,\ell_j)$ in $\mathbb{Z}\times \tfrac{1}{2}\N_0$ and indexes $-\ell_j \leq m_j \leq \ell_j$ such that
$$
0<|{k}_j+{c} m_j-iq| \leq \frac{1}{j}e^{-B(|{k}_j|+\ell_j)},
$$
for all $j \in \mathbb{N}$. In particular, $|{k}_j + m_j {a}+\emph{\emph{\mbox{Im}}}(q)| \rightarrow 0$ and $|m_j {b} - \emph{\emph{\mbox{Re}}}(q)| \rightarrow 0$ when $j \rightarrow \infty$. Therefore, taking $j$ big enough we can apply the Mean Value Theorem again and obtain a constant $C>0$ such that
\begin{align*} 
|1-e^{\pm 2\pi (im_j{c} + q)}| & \leq  |1-e^{\pm 2\pi(\emph{\emph{\mbox{Re}}}(q)-m_j{b})}\cos(2\pi(m_j{a}+\emph{\emph{\mbox{Im}}}(q)))| \\
  & \quad +  |e^{\pm 2\pi(\emph{\emph{\mbox{Re}}}(q)-m_j{b})}|\cdot |\sin(2\pi(m_j{a}+\emph{\emph{\mbox{Im}}}(q)))| \\
   & \leq  |1-\cos(2\pi({k}_j+m_j{a}+\emph{\emph{\mbox{Im}}}(q)))| + |1-e^{\pm 2\pi(\emph{\emph{\mbox{Re}}}(q)-m_j{b})}| \\
   & \quad + e^{\pm 2\pi(\emph{\emph{\mbox{Re}}}(q)-m_j{b})} |\sin(2\pi({k}_j+m_j{a}+\emph{\emph{\mbox{Im}}}(q)))| \\
   & \leq  C (|{k}_j+m_j{a}+\emph{\emph{\mbox{Im}}}(q)|+|\emph{\emph{\mbox{Re}}}(q)-m_j{b}|) \\
   & \leq  C e^{-B(|{k}_j|+\ell_j)}
\end{align*}
 and then $L$ does not satisfy the \eqref{ADC4} condition.

\end{proof}

The first three formulations of the analytic Diophantine condition are essentially convenient ways to write the same thing. They are useful tools for producing new examples of operators satisfying (or not satisfying) the conditions to be globally analytic hypoelliptic. The \eqref{ADC4} formulation will be important mainly in the proof of some results in the next section.

\begin{ex}\label{lambdaiZ} 
	When $c=0$, we have $L = \partial_t+{q}$. Thus
$$\mathcal{N} = \{(k,\ell)\in \mathbb{Z}\times \tfrac{1}{2}\N_0; k - i{q}=0 \},$$
and $\mathcal{N} \neq \varnothing$ if and only if $i{q} \in \mathbb{Z}$. 

By Proposition \ref{Ninfinite}, $L$ is not (GAH) when  $iq \in \mathbb{Z}$. On the other hand, when $i {q} \notin \mathbb{Z}$, we have $\mathcal{N} = \varnothing$ and $|1-e^{2\pi{q}}| =C  \neq 0$, therefore
$$
|1-e^{2\pi{q}}| \geq Ce^{-\ell},
$$
for all $\ell \in \tfrac{1}{2}\N_0$, and $L$ is (GAH).
\end{ex}

\bigskip
\section{A class of evolution operators}

Inspired by the works of Hounie \cite{Hou79,Hou82}, Bergamasco \cite{Ber94,Ber99}, Petronilho \cite{Pet11} and so many other researchers that have studied global properties of vector fields on tori, it is natural to inquire if the operator  
$$
\partial_t +c(t) \partial_0,
$$ 
is globally  analytic hypoelliptic on $\mathbb{T}^1 \times \mathbb{S}^3$, when $c \in C^\omega (\mathbb{T}^1)$. 

Let us prove that the answer to this question is negative. For $u \in \mathcal{D}'(\mathbb{T}^1\times \St)$ and $\ell \in \frac{1}{2} \mathbb{N}_0$ we have the following matrix entries of the partial Fourier coefficients of $(\partial_t+c(t)\partial_0)u$: 
$$
\left[\partial_t +imc(t) \right] \widehat{u}(t,\ell)_{mn}, \mbox{ for }  -\ell \leq m,n \leq \ell.
$$

Now consider the sequence of real analytic functions $\{\widehat{u}(t,\ell); \ell \in \frac{1}{2} \mathbb{N}_0\}$ defined by 
$$
	\widehat{u}(t,\ell)_{mn} =
	\begin{cases}
		1, & \mbox{ whenever } m=0; \\
		0, & \mbox{ otherwise.}
	\end{cases}
$$

This sequence defines a distribution $u \in \mathcal{D}'(\mathbb{T}^1\times \St)$ such that $(\partial_t +c(t) \partial_0)u=0$. Moreover, since the set $\{ \ell \in \frac{1}{2}\mathbb{N}_0; \widehat{u}(t,\ell)_{mn}=1\}$ has infinitely many elements, this sequence does not correspond to any real analytic function, which finishes the proof.

\medskip
However, let us recall that in  Examples \ref{ex_const-coef} and \ref{lambdaiZ} we obtained some interesting cases of globally analytic hypoelliptic operators by adding a zero order perturbation to a constant-coefficient vector field defined on $\mathbb{T}^1\times \mathbb{S}^3$. Therefore it seems reasonable to consider the following class of perturbed operators
\begin{equation}\label{Pq}
P \doteq   \partial_t + c(t) \partial_0 + q
\end{equation}
defined on $\mathbb{T}^1\times \mathbb{S}^3$, where $q \in \mathbb{C}$ and $c(t)=a(t)+ib(t)$, with $a,b \in C^\omega(\mathbb{T}^1;\mathbb{R})$ being real-valued analytic functions. 

In the remainder of this article we will also adopt the following notations: $C(t) = A(t)+iB(t)$, where  
$$A(t)= \int_0^{t}a(s)ds \mbox{ \ and \ } B(t)= \int_0^{t}b(s)ds;$$ 
and $c_0 = a_0+ib_0$ where 
\begin{equation}\label{c0}
a_0= \frac{1}{2\pi} \int_0^{2 \pi} a(s)ds \mbox{ \ and \ } b_0= \frac{1}{2\pi} \int_0^{2 \pi} b(s)ds.
\end{equation}

We will also consider the constant-coefficient operator
 \begin{equation}\label{P0q}
   P_0 \doteq  \partial_t + c_0\partial_0 + q,
\end{equation}
and the corresponding set
\begin{align*}
\mathcal{N}_0 & = \left\{({k},\ell) \in \mathbb{Z}\times \tfrac{1}{2}\mathbb{N}_0 ;{k}+c_0 m-i{q}=0, \mbox{ for some } -\ell \leq m \leq \ell, \ \ell-m \in \N_0 \right\}\\ 
& =  \left\{ \ell \in \frac{1}{2}\mathbb{N}_0; \ imc_0+q \in i \mathbb{Z}, \ \textrm{ for some } -\ell \leq m \leq \ell, \ \ell-m \in \N_0 \right\}.
\end{align*}

By Proposition \ref{Ninfinite}, the number of elements of set $\mathcal{N}_0$ is related to the global analytic hypoellipticity of $P_0$. The next result establishes the same type of connection between the set $\mathcal{N}_0$ and the operator $P$.

\begin{prop}\label{Ninfinite2} 
	If $P$ is (GAH), then $\mathcal{N}_0$ is finite. 
\end{prop}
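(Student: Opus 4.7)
My plan is to argue by contrapositive: I will assume $\mathcal{N}_0$ is infinite and construct $u \in \mathcal{D}'(\mathbb{T}^1\times\mathbb{S}^3)$ with $u \notin C^\omega(\mathbb{T}^1\times\mathbb{S}^3)$ and $Pu \equiv 0 \in C^\omega(\mathbb{T}^1\times\mathbb{S}^3)$, thereby violating (GAH). The guiding idea (in the spirit of \cite{BNZ00}) is that each $\ell \in \mathcal{N}_0$ supplies a nontrivial $2\pi$-periodic solution of the resonant partial Fourier ODE on $\mathbb{T}^1$, and that infinitely many such resonances overwhelm the exponential decay that analyticity would demand through Proposition \ref{functionanaly}.

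The construction proceeds as follows. For each $\ell \in \mathcal{N}_0$ I will fix one $m_\ell$ with $-\ell\leq m_\ell\leq \ell$, $\ell - m_\ell \in \N_0$ and $im_\ell c_0 + q \in i\Z$. Using $C(t+2\pi) = C(t) + 2\pi c_0$, this integrality condition is exactly what makes the analytic function $t \mapsto e^{-im_\ell C(t) - qt}$ descend to a $2\pi$-periodic solution of $[\partial_t + im_\ell c(t) + q]v = 0$. I then normalize
\[
v_\ell(t) \doteq \delta_\ell\, e^{-im_\ell C(t) - qt},\qquad \delta_\ell \doteq \|e^{-im_\ell C - qt}\|_{L^\infty(\mathbb{T}^1)}^{-1},
\]
so that $\|v_\ell\|_{L^\infty(\mathbb{T}^1)} = 1$, and set
\[
\widehat{u}(t,\ell)_{mn} = \begin{cases} v_\ell(t), & \ell \in \mathcal{N}_0 \text{ and } m=n=m_\ell,\\ 0, & \text{otherwise.}\end{cases}
\]
Because $|\widehat{u}(t,\ell)_{mn}|\leq 1$ uniformly, Proposition \ref{caracdist} (with $K=0$) shows that this sequence is the partial Fourier data of some $u \in \mathcal{D}'(\mathbb{T}^1\times\mathbb{S}^3)$; and since $P$ acts in Fourier by $\partial_t + imc(t) + q$ on each entry, every Fourier coefficient of $Pu$ vanishes, so $Pu \equiv 0$.

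To see that $u \notin C^\omega$, I will invoke Proposition \ref{functionanaly} together with Remark \ref{noderivative} (applicable since each $v_\ell$ is real analytic in $t$): analyticity of $u$ would force $\sup_t|\widehat{u}(t,\ell)_{m_\ell m_\ell}| \leq C e^{-B\ell}$ for some $B > 0$. But by construction $\sup_t|v_\ell(t)| = 1$ for every one of the infinitely many $\ell \in \mathcal{N}_0$, and no such exponential decay can hold. This contradicts (GAH) and closes the argument.

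The point I expect to be most delicate is the choice of normalization. When $b \not\equiv 0$, the factor $e^{m_\ell B(t)}$ makes the unnormalized exponential $e^{-im_\ell C(t) - qt}$ have sup norm growing exponentially with $|m_\ell| \leq \ell$, so as a raw sequence it is not even tempered and does not define a distribution. Dividing by $\delta_\ell$ is what simultaneously yields tempered (in fact uniformly bounded) Fourier coefficients and preserves $\sup_t|v_\ell|=1$, which is what is needed to defeat the analytic estimate. Once this balance is struck, everything else is a routine application of the Fourier characterizations recalled in Section 2.
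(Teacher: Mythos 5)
Your construction is correct and is essentially the paper's own proof: the paper builds the same resonant homogeneous solutions (exponentials of $\pm\bigl(im_j C(t)+qt\bigr)$ for the modes with $im_jc_0+q\in i\Z$), and your sup-norm normalization $\delta_\ell$ is exactly its subtraction of the maximum $M_j$ of the real part of the exponent. Both arguments then conclude in the same way, via the uniform bound and Proposition \ref{caracdist} to get a distributional solution of $Pu=0$, and via the failure of the decay required by Propositions \ref{functionanaly} and \ref{caracfunc} (using that infinitely many $\ell$ occur) to see that $u$ is not analytic.
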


\begin{proof} Suppose by contradiction that $\mathcal{N}_0$ has infinitely many elements. In this case, there are sequences $\{ ({k}_j,\ell_j) \in \mathbb{Z}\times \tfrac{1}{2}\mathbb{N}\}_{j\in \N}$ and $\{ m_j\}_{j\in \N}$, with $-\ell_j \leq m_j \leq \ell_j$, such that
\begin{equation}\label{integer} {k}_j +c_0 m_j -iq=0, \  j \in \mathbb{N}.
\end{equation}
	
For each $j\in\N$, let $t_j \in [0,2\pi]$ and $M_j \in \mathbb{R}$ such that
$$ 
M_j = \int_{0}^{t_j} \big[\mbox{Re}(q)-m_jb(\sigma) \big] d\sigma = \max\limits_{0 \leq t \leq 2\pi} \int_{0}^t \big[ \mbox{Re}(q) - m_jb(\sigma) \big] d\sigma,
$$
and consider the sequence
$$
  \widehat{u}(t,\ell)_{mn} = 
  \begin{cases}
    \exp\left\{\displaystyle \int_{0}^t \big[im_jc(\sigma)+{q} \big] d\sigma -M_j \right\}, & \mbox{if } \ell=\ell_j \mbox{ and } m=n=m_j \\ 
    0 & \mbox{otherwise.}
  \end{cases}
$$

Since $im_jc_0+q \in i \mathbb{Z}$, for all $j \in \N$, all the partial Fourier coefficients $\widehat{u}(\cdot,\ell) \in C^\infty(\mathbb{T}^1)$ and 
$$
|\widehat{u}(t,\ell_j)_{m_j m_j}| = \exp\left\{\int_{0}^t \big[\mbox{Re}(q)-m_jb(\sigma)\big] -M_j  \right\} \leq 1,  \mbox{ for all } t \in [0,2\pi].
$$

Therefore the sequence $\{\widehat{u}(\cdot,\ell); \ell \in\tfrac{1}{2}\mathbb{N}\}$ corresponds to a distribution $u \in \mathcal{D}'(\mathbb{T}^1\times \St)$. Moreover
$$
\widehat{u}(t_j,\ell_j)_{m_j m_j} = 1, \ j \in \mathbb{N}.
$$

Now, note that set $\{\ell_j\}_{j\in \mathbb{N}}$ cannot be bounded. Indeed, if it were bounded, by \eqref{integer}, the set $\{{k}_j\}_{j \in \mathbb{N}}$ would also be bounded and, consequently, the set $\mathcal{N}_0$ would be finite (because $\N$ is discrete). 
	
Hence, $\{\ell_j\}_{j \in \mathbb{N}}$ has infinitely many elements and $u \in \mathcal{D}'(\mathbb{T}^1\times \St) \setminus C^\infty(\mathbb{T}^1\times \St)$. Since
$$
[\partial_t +im_jc(t)+q]\widehat{u}(t,\ell_j)_{m_jm_j} = 0, \ j\in\N,
$$
then $P u=0$ and $P$ is not (GAH). 

\end{proof}

Given the last proposition, the next step is to find out under what conditions (on the perturbation $q$ and on the coefficient $c(t)$) the set $\mathcal{N}_0$ is finite.  

\medskip
Suppose that $u \in \mathcal{D}'(\mathbb{T}^1\times \St)$ is a solution of $P u=f \in C^\omega(\mathbb{T}^1 \times \St)$, then for each $\ell \in\tfrac{1}{2}\mathbb{N}$ and $-\ell \leq m,n \leq \ell$ we have
$$
\widehat{P u}(t,\ell)_{mn} = \big[\partial_t+imc(t)+{q}\, \big]\widehat{u}(t,\ell)_{mn}= \widehat{f}(t,\ell)_{mn},
$$
that is equivalent to 
\begin{equation}\label{conjugation} 
\big[ \partial_t + imc_0+{q}\, \big](e^{im \mathscr{C}(t)}\widehat{u}(t,\ell)_{mn}) = e^{im\mathscr{C}(t)}\widehat{f}(t,\ell)_{mn}.
\end{equation}
where 
$$\mathscr{C}(t) \doteq  C(t) - c_0t.$$

\medskip
All solutions of the ordinary differential equations \eqref{conjugation} are analytical and their expressions depend on the value 
\begin{equation}\label{gamma_m}
	\gamma_m=imc_0+{q} = (\mbox{Re}({q}) -mb_0) + i (ma_0+\mbox{Im}({q})).	
\end{equation}

The next result gives us the form of a solution in terms of a fixed constant.

\begin{lemma}\label{solutionconstant}
	Let $\gamma \in \C$, $g \in C^\infty(\mathbb{T}^1)$ and consider the equation
	\begin{equation}\label{equationconstant}
	\frac{d}{dt}v(t)+\gamma v(t)=g(t).
	\end{equation}
	
	If $\gamma \notin i\Z$ then the equation \eqref{equationconstant} has a unique solution that can be expressed by
	\begin{equation}\label{solutionminus}
	v(t)=\frac{1}{1-e^{-2\pi\gamma}} \int_0^{2\pi} e^{-\gamma s} g(t-s)\, ds,
	\end{equation}
	or equivalently,
	\begin{equation}\label{solutionplus}
	v(t)=\frac{1}{e^{2\pi\gamma}-1} \int_0^{2\pi} e^{\gamma r} g(t+r)\, dr.
	\end{equation}
	
	If $\gamma \in i\Z$ and $\int_0^{2\pi} e^{\gamma s}g(s)\, ds=0$ then we have that
	\begin{equation}\label{solutionzero}
	v(t)=e^{-\gamma t}\int_0^t e^{\gamma s}g(s) \, ds
	\end{equation}
	is a solution of the equation \eqref{equationconstant}.
\end{lemma}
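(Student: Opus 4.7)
The plan is to treat both cases by starting from the standard integrating factor. Multiplying \eqref{equationconstant} by $e^{\gamma t}$ gives $(e^{\gamma t}v(t))' = e^{\gamma t}g(t)$, so every solution on $\mathbb{R}$ has the form
$$v(t) = e^{-\gamma t}\left[v(0) + \int_0^t e^{\gamma s}g(s)\,ds\right].$$
Since we are looking for $2\pi$-periodic solutions, I would impose the condition $v(2\pi)=v(0)$, which reads
$$v(0)(1-e^{-2\pi\gamma}) = e^{-2\pi\gamma}\int_0^{2\pi}e^{\gamma s}g(s)\,ds.$$

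In the case $\gamma\notin i\mathbb{Z}$, the factor $1-e^{-2\pi\gamma}$ is nonzero, so $v(0)$ is uniquely determined, and hence so is $v$. To show this unique solution equals \eqref{solutionminus}, I would start from the right-hand side of \eqref{solutionminus}, perform the substitution $u=t-s$, and split the resulting integral over $[t-2\pi,t]$ as the sum of integrals over $[t-2\pi,0]$ and $[0,t]$; translating the first by $2\pi$ and using the $2\pi$-periodicity of $g$ brings out an extra factor $e^{-2\pi\gamma}$, and after collecting terms one recovers the expression $e^{-\gamma t}[v(0)+\int_0^t e^{\gamma s}g(s)\,ds]$ with exactly the value of $v(0)$ forced by periodicity. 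The equivalence with \eqref{solutionplus} then follows either from the same type of change of variables $r=2\pi-s$ (together with periodicity of $g$), or directly by differentiating the candidate in \eqref{solutionplus} under the integral sign and checking it satisfies \eqref{equationconstant} with the correct periodic boundary value.

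In the case $\gamma\in i\mathbb{Z}$, the compatibility requirement $v(2\pi)=v(0)$ becomes $\int_0^{2\pi}e^{\gamma s}g(s)\,ds=0$, which is precisely the hypothesis imposed. Under this hypothesis, I would verify directly that the function defined by \eqref{solutionzero} solves \eqref{equationconstant}: its derivative is $-\gamma e^{-\gamma t}\int_0^t e^{\gamma s}g(s)\,ds + e^{-\gamma t}e^{\gamma t}g(t) = -\gamma v(t)+g(t)$, and the endpoint values $v(0)=0$ and $v(2\pi)=e^{-2\pi\gamma}\cdot 0 = 0$ match, so $v$ extends $2\pi$-periodically to $\mathbb{T}^1$.

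The computation is essentially routine; the only delicate point is the bookkeeping in the change of variables that identifies the integrating-factor expression with the two symmetric forms \eqref{solutionminus} and \eqref{solutionplus}, where one must use the periodicity of $g$ to rewrite integrals over translated intervals. I would also remark, following Remark \ref{noderivative}, that when $g$ is real analytic the solution $v$ is real analytic as well, since the right-hand sides of \eqref{solutionminus}--\eqref{solutionzero} are analytic functions of $t$; this is not part of the statement but is what makes the lemma useful in subsequent applications to \eqref{conjugation}.
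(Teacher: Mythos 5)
Your proposal is correct. It takes a slightly different (and more self-contained) route than the paper: the paper simply observes that $E=(1-e^{-2\pi\gamma})^{-1}e^{\gamma t}$ is the fundamental solution of $d/dt+\gamma$ on $\mathbb{T}^1$ when $\gamma\notin i\Z$, so that \eqref{solutionminus} is the convolution of $E$ with $g$, and obtains the equivalence of \eqref{solutionminus} and \eqref{solutionplus} by the same change of variable $s\mapsto 2\pi-r$ that you use; you instead start from variation of parameters on $\mathbb{R}$ and impose the periodicity condition $v(2\pi)=v(0)$. Your approach buys two things: it makes the uniqueness assertion explicit (the boundary condition pins down $v(0)$ precisely because $1-e^{-2\pi\gamma}\neq 0$), and it shows in one stroke that in the resonant case $\gamma\in i\Z$ the compatibility condition is exactly $\int_0^{2\pi}e^{\gamma s}g(s)\,ds=0$, after which \eqref{solutionzero} is verified directly, as you do. The only detail worth tightening is the periodicity of \eqref{solutionzero}: matching the endpoint values $v(0)=v(2\pi)=0$ gives a continuous periodic extension, and one should note (or check directly, using $e^{-2\pi\gamma}=1$ and the periodicity of $g$, that $v(t+2\pi)=v(t)$ for all $t$) that the extension is smooth; this is immediate but deserves a sentence. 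You might also remark, as the paper does, that the solution \eqref{solutionzero} is not unique, since when $\gamma\in i\Z$ the homogeneous solutions $Ce^{-\gamma t}$ are themselves $2\pi$-periodic.
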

To prove this lemma observe that $E=(1-e^{-2\pi\gamma})^{-1}e^{\gamma t}$ is the fundamental solution of the operator ${d}/{dt}+\gamma$ when $\gamma \notin i\Z$. The equivalence between \eqref{solutionminus} and \eqref{solutionplus} follows from the change of variable $s \mapsto -r+2\pi$. We point out that the solution \eqref{solutionzero} is not unique.

\medskip
Turning back to the problem of determining under what conditions the set $\mathcal{N}_0$ is finite, recall that, by Remark \ref{finiteimpliesempty}, if $\mathcal{N}_0$ is finite, then $\mathcal{N}_0=\varnothing$ and  $\gamma_m \notin i\mathbb{Z}$ for all $m$.

However,
$$
\gamma_m \in i\mathbb{Z} \ \Longleftrightarrow \ \mbox{Re}(q)-mb_0 =0 \mbox{ \ and \ } \mbox{Im}(q) +ma_0 \in \mathbb{Z}.
$$

Thus, when $b_0\neq 0$, this equivalence is true if $m=\mbox{Re}(q)/{b_0} \in \frac{1}{2}\mathbb{Z}$ and $\mbox{Im}(q)+\mbox{Re}(q)\frac{a_0}{b_0} \in \mathbb{Z}$. When $b_0=0$, this equivalence is true if $\mbox{Re}(q)=0$ and $\mbox{Im}(q) \in \mathbb{Z}+\frac{a_0}{2} \mathbb{Z}$.

Therefore, we conclude that $\gamma_m \notin i\mathbb{Z}$ for all $m$ ($\mathcal{N}_0=\varnothing$) if and only if one of the following conditions holds: 
\begin{align}
	& b_0\neq0 \mbox{ and either } \frac{\mbox{Re}(q)}{b_0} \notin \tfrac{1}{2}\mathbb{Z} \mbox{ or } \mbox{Im}(q)+\mbox{Re}(q)\frac{a_0}{b_0} \notin \mathbb{Z}. \tag{C1} \label{C1} \\[1mm]
	& b_0=0  \mbox{ and either } \mbox{Re}(q) \neq 0 \mbox{ or } \mbox{Im}(q) \notin \mathbb{Z}+\frac{a_0}{2} \mathbb{Z}. \tag{C2} \label{C2}
\end{align}

\bigskip
\subsection{The Nirenberg-Treves condition ($\mathcal{P}$)} \

Now that we have established conditions on $c_0$ and $q$, we can state our first result about the global analytic hypoellipticity of the operator $P$. We will begin by addressing the case where $b(t)=\mbox{Im}\, c(t)$ does not change sign, that is, when the Nirenberg-Treves condition ($\mathcal{P}$) holds.

\begin{thm}\label{Lisgah} 
Let $q \in \mathbb{C}$ and $c(t)\in C^\omega(\mathbb{T}^1)$. If
 	\begin{enumerate}
		\item[i.] ${\rm Im}\, c(t) \not\equiv 0$ and does not change sign; 
		\item[ii.] $P_0 = \partial_t+c_0 \partial_0+{q}$ satisfies the \eqref{ADC} condition; 
		\item[iii.] $q\in\C$ satisfies condition \eqref{C1}.
	\end{enumerate}
Then $P = \partial_t +c(t)\partial_0 +{q}$ is (GAH). 
\end{thm}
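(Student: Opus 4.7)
The plan is to fix an arbitrary $u \in \mathcal{D}'(\mathbb{T}^1 \times \St)$ with $Pu = f \in C^\omega(\mathbb{T}^1 \times \St)$, derive explicit integral representations for the partial Fourier coefficients $\widehat{u}(t, \ell)_{mn}$, and show that they decay exponentially in $\ell$ uniformly in $t, m, n$; combined with Remark \ref{noderivative} and Proposition \ref{functionanaly} this gives $u \in C^\omega(\mathbb{T}^1 \times \St)$. Starting from the ODE $[\partial_t + imc(t) + q]\widehat{u}(t, \ell)_{mn} = \widehat{f}(t, \ell)_{mn}$, the conjugation \eqref{conjugation} turns it into $[\partial_t + \gamma_m]v(t) = e^{im\mathscr{C}(t)} \widehat{f}(t, \ell)_{mn}$, where $v(t) = e^{im\mathscr{C}(t)}\widehat{u}(t, \ell)_{mn}$ and $\gamma_m = imc_0 + q$. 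Hypothesis (iii) is precisely the statement $\mathcal{N}_0 = \varnothing$, so $\gamma_m \notin i\mathbb{Z}$ for every admissible $m$, and Lemma \ref{solutionconstant} delivers a unique solution in either of the equivalent forms \eqref{solutionminus}, \eqref{solutionplus}.

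Assume without loss of generality that $b \geq 0$, so $b_0 > 0$. The central move is to choose the integral formula according to the sign of $m$: for $m \leq 0$ use the backward expression
\[
\widehat{u}(t, \ell)_{mn} = \frac{1}{1 - e^{-2\pi \gamma_m}} \int_0^{2\pi} e^{-\gamma_m s}\, e^{im[\mathscr{C}(t-s) - \mathscr{C}(t)]}\, \widehat{f}(t-s, \ell)_{mn}\, ds,
\]
and for $m > 0$ the analogous forward one with prefactor $(e^{2\pi\gamma_m} - 1)^{-1}$ and integrand involving $\widehat{f}(t+r, \ell)_{mn}$. A direct modulus computation shows that the $mb_0 s$ term from $\operatorname{Re}(\gamma_m) = \operatorname{Re}(q) - mb_0$ cancels the one from $\operatorname{Im}\mathscr{C}(t) = B(t) - b_0 t$, leaving
\[
\bigl|e^{\mp \gamma_m s}\, e^{im[\mathscr{C}(t \mp s) - \mathscr{C}(t)]}\bigr| = e^{m[B(t) - B(t \mp s)] \mp s \operatorname{Re}(q)}.
\]
Because $b \geq 0$, the sign choice of $m$ versus $\mp$ guarantees that $m[B(t) - B(t \mp s)] \leq 0$ for every $s \in [0, 2\pi]$, and hence the whole factor is bounded by $e^{2\pi|\operatorname{Re}(q)|}$, uniformly in $m, n, \ell, t$.

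The final step uses the equivalence between \eqref{ADC} and \eqref{ADC4} from Lemma \ref{equivalence}: hypothesis (ii) supplies, for every $B > 0$, a constant $K_B > 0$ such that $|1 - e^{\pm 2\pi \gamma_m}| \geq K_B e^{-B \ell}$. Combined with the analytic decay $|\widehat{f}(t, \ell)_{mn}| \leq C_f\, e^{-B_f \ell}$ from Proposition \ref{functionanaly}, picking $B = B_f/2$ yields $|\widehat{u}(t, \ell)_{mn}| \leq C'\, e^{-(B_f/2)\ell}$ uniformly in $t, m, n$, which is exactly the estimate required. The main obstacle is the cancellation in the previous paragraph: individually, $|e^{\mp \gamma_m s}|$ and $|e^{im\mathscr{C}(\cdot)}|$ each grow like $e^{c|m|}$ with $|m|$ possibly as large as $\ell$, so only the exact cancellation of their linear-in-$m$ exponents makes the estimate feasible. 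The hypothesis that $b$ does not change sign is essential precisely because it allows the residual term $m[B(t) - B(t \mp s)]$ to be rendered nonpositive by an appropriate choice of integral representation; without it this remainder could be of either sign and the small-denominator bound from \eqref{ADC4} would no longer suffice.
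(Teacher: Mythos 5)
Your proposal is correct and follows essentially the same route as the paper: the conjugation \eqref{conjugation}, the choice between the two representations \eqref{solutionminus}/\eqref{solutionplus} according to the sign of $m$ so that the residual exponent $m[B(t)-B(t\mp s)]$ is nonpositive, the bound on $|1-e^{\pm 2\pi\gamma_m}|$ via Lemma \ref{equivalence}, and the conclusion through Proposition \ref{functionanaly} and Remark \ref{noderivative}. The only differences are cosmetic (you normalize $b\geq 0$ where the paper takes $b\leq 0$, swapping which formula goes with which sign of $m$), and your choice $B=B_f/2$ in the \eqref{ADC4} estimate states the final exponent bookkeeping more cleanly than the paper's wording.
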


\begin{proof}
Writing $c(t)=a(t)+ib(t)$ and $c_0=a_0+ib_0$, since $b$ does not change sign and is not identically zero, then $b_0>0$ or $b_0<0$. Up to a change of variable, we can assume without loss of generality that $b_0<0$  
		
By condition \eqref{C1}, for each $\ell \in\tfrac{1}{2}\mathbb{N}$ and  $-\ell \leq m,n \leq \ell$, the equation \eqref{conjugation} has only one periodic smooth solution, which can be written in the form 
\begin{equation}\label{solution_m>0}
\widehat{u}(t,\ell)_{mn} = (1-e^{-2\pi\gamma_m})^{-1}\int_{0}^{2\pi} e^{-{q} s}e^{-im \int_{t-s}^t c(\sigma)d\sigma}\widehat{f}(t-s,\ell)_{mn} ds,
\end{equation}
when $m \geq 0$, or
\begin{equation}\label{solution_m<0}
\widehat{u}(t,\ell)_{mn} = (e^{2\pi\gamma_m}-1)^{-1}\int_{0}^{2\pi} e^{qs}e^{-im \int_t^{t+s} c(\sigma) d\sigma} \widehat{f} (t+s,\ell)_{mn} ds.
\end{equation}
when $m \leq 0$. These expressions can be obtained from \eqref{solutionminus}, when $m \geq 0$, and from \eqref{solutionplus}, when $m \leq 0$.

Since the two equivalent expressions above provide the only solutions of the ordinary differential equations \ref{conjugation}, from this point the proof consists in proving that these partial Fourier coefficients satisfy the condition of decay at infinity given by Proposition \ref{functionanaly} and Remark \ref{noderivative}.

Let us make estimates for the case where $m\geq 0$ (the other case is completely analogous and follows the same steps).

Since we are assuming $b_0<0$, and $b$ does not change sign, then $b(\sigma)\leq 0$, for all $ \sigma \in [0,2\pi]$. Moreover, since we are assuming that $m \geq 0$, we have 
$$
\left|\exp{\left(-im \int_{t-s}^t c(\sigma)d\sigma\right)}\right| = \exp\left(m\int_{t-s}^t b(\sigma)d\sigma\right) \leq 1, \ \mbox{ for all } t,s \in [0,2\pi].
$$

Recall that $f \in C^\omega(\mathbb{T}^1\times \St)$, therefore, by Proposition \ref{functionanaly}, there are constants $B>0$ and $M>0$ such that
$$
|\widehat{f}(\cdot , \ell)_{mn}| \leq M e^{-B \ell},
$$
for all $\ell \in \frac{1}{2} \mathbb{N}_0$ and $-\ell \leq m,n \leq \ell$. 

Now, if $C >0$ is such that $|e^{q s}| <C$, for all $ s \in [0,2\pi]$, then we have
\begin{align*}
\left| \widehat{u}(t,\ell)_{mn} \right| & \leq  \left| e^{2\pi\gamma_m}-1\right|^{-1} \int_{0}^{2\pi}\! \left| e^{qs}\right|   \left| \exp\left(-im \! \int_t^{t+s} c(\sigma) d\sigma\right) \right|  \left| \widehat{f} (t+s,\ell)_{mn}\right| ds \\[2mm]
& \leq 2\pi M C e^{-B \ell} \left| e^{2\pi\gamma_m}-1\right|^{-1},
\end{align*}
for all $\ell \in \frac{1}{2} \mathbb{N}_0$ and $-\ell \leq m,n \leq \ell$. 

Finally, since $P_0$ satisfies the \eqref{ADC} condition, choosing $\widetilde{B}>B$, by Lemma \ref{equivalence}, there is a constant $\widetilde{M}>0$ such that
$$
|1-e^{-2\pi\gamma_m}| \geq \widetilde{M} e^{-\widetilde{B}{\ell}},
$$
for all $\ell \in \tfrac{1}{2}\N_0$, $-\ell \leq m \leq \ell$, $\ell-m\in\N_0$ such that $\gamma_m \notin i\mathbb{Z}$. 

Thus,
$$
|\widehat{u}(t,\ell)_{mn}| \leq  K e^{-(\widetilde{B}-B){\ell}},
$$
for all $t \in [0,2\pi]$, $\ell \in \frac{1}{2} \mathbb{N}_0$ and $-\ell \leq m,n \leq \ell$ with $m\geq 0$. 

It follows from Proposition \ref{functionanaly} and Remark \ref{noderivative} that $u \in C^\omega(\mathbb{T}^1\times \St)$ and therefore $P$ is (GAH). 

\end{proof}

\begin{ex}\label{averagen} 
	Let $n\neq 0$ be an integer number and set $b(t) = \sin(t)+n$. Then $b\in C^\omega(\mathbb{T}^1)$ does not change sign and $b_0 = n$. Taking $q\in\mathbb{C}$ with ${\rm Re}(q) \notin \mathbb{Q}$, there is a constant $C>0$ such that $|b_0 m - \emph{\emph{{\rm Re}}}({q})| \geq C>0$ for all $m \in \frac{1}{2}\mathbb{Z}$. Thus,  for any real-valued analytic function $a(t)$ we have
	$$| ({k}+a_0m+{\emph{\mbox{Im}}}({q}))+i(b_0m-{\emph{\mbox{Re}}}({q}))| \geq |b_0 m -{\emph{\mbox{Re}}}({q})| \geq C.$$

	Therefore, $P_0$ satisfies the \eqref{ADC}  condition and ${q}$ satisfies \eqref{C1}. It follows from Theorem \ref{Lisgah} that $P= \partial_t+\big(a(t)+i(\sin(t)+n)\big)\partial_0 + q$ is (GAH). 
\end{ex}


\begin{thm}\label{equivalence2} If $b\equiv 0$, then $P = \partial_t +a(t)\partial_0 +{q}$ is (GAH) if and only if $P_0 = \partial_t +a_0\partial_0 +{q}$  is (GAH).
\end{thm}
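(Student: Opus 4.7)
The plan is to construct a conjugation that intertwines $P$ with $P_0$ and preserves both distributional and analytic regularity; once available, it transfers (GAH) back and forth. When $b\equiv 0$, the primitive $\mathscr{C}(t)=A(t)-a_0 t$ is a real analytic, real-valued, $2\pi$-periodic function. Define the map $\Psi:\mathcal{D}'(\TS)\to\mathcal{D}'(\TS)$ level by level on the partial Fourier coefficients by
$$\widehat{\Psi u}(t,\ell)_{mn}\doteq e^{im\mathscr{C}(t)}\,\widehat{u}(t,\ell)_{mn}.$$
A direct computation, which is essentially the identity \eqref{conjugation} applied level by level, gives the intertwining $P_0\circ\Psi=\Psi\circ P$; the inverse $\Psi^{-1}$ is plainly multiplication by $e^{-im\mathscr{C}(t)}$.

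The next step is to verify that $\Psi$ is a well-defined bijection on each of $\mathcal{D}'(\TS)$ and $C^\omega(\TS)$. The analytic case is immediate from the fact that $|e^{im\mathscr{C}(t)}|=1$ for every $t\in\mathbb{T}^1$ and every $m$: if $|\widehat{u}(t,\ell)_{mn}|\leq Ce^{-B\ell}$, as supplied by Proposition \ref{functionanaly} together with Remark \ref{noderivative}, then the same bound holds verbatim for $\widehat{\Psi u}(t,\ell)_{mn}$, which is still real analytic in $t$ as a product of real analytic factors. For distributions one checks the criterion of Proposition \ref{caracdist} using the identity $\langle\widehat{\Psi u}(\cdot,\ell)_{mn},\varphi\rangle=\langle\widehat{u}(\cdot,\ell)_{mn},e^{im\mathscr{C}}\varphi\rangle$ together with a Leibniz-type bound $p_K(e^{im\mathscr{C}}\varphi)\leq C_K(1+|m|)^K p_K(\varphi)\leq C_K(1+\ell)^K p_K(\varphi)$, which merely increases the order of the distribution by a polynomial factor in $\ell$ and is therefore still compatible with \eqref{pk}.

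With these two ingredients, the equivalence of (GAH) follows by a symmetric argument. Assume $P_0$ is (GAH) and let $u\in\mathcal{D}'(\TS)$ satisfy $Pu\in C^\omega(\TS)$; then $\Psi u\in\mathcal{D}'(\TS)$ and $P_0(\Psi u)=\Psi(Pu)\in C^\omega(\TS)$, whence $\Psi u\in C^\omega(\TS)$ and finally $u=\Psi^{-1}(\Psi u)\in C^\omega(\TS)$. The converse is identical, with the roles of $P$ and $P_0$ exchanged. The main obstacle, and precisely the point where the hypothesis $b\equiv 0$ is essential, is the preservation of the analytic class: it hinges on $|e^{im\mathscr{C}(t)}|=1$, which fails if $b\not\equiv 0$ since then $|e^{im\mathscr{C}(t)}|$ can grow exponentially in $|m|$ and destroy the decay $e^{-B\ell}$ that characterizes $C^\omega(\TS)$.
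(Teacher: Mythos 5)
Your proposal is correct and follows essentially the same route as the paper: the gauge transform $\Psi$ given by multiplication of the partial Fourier coefficients by $e^{im\mathscr{A}(t)}$, the intertwining $P_0\circ\Psi=\Psi\circ P$, and the observation that $|e^{im\mathscr{A}(t)}|=1$ (valid exactly because $b\equiv 0$) preserves the exponential decay characterizing $C^\omega(\TS)$. The only difference is that you verify the distributional automorphism directly via a Leibniz-type bound, whereas the paper cites Propositions 4.6 and 4.7 of \cite{KMR19} for that step.
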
 

\begin{proof}
Let $\mathscr{A}(t) \doteq  \displaystyle\int_0^{t}a(s)ds - a_0t$ and
define
$$\Psi_a u(t,x) \doteq  \sum_{\ell \in \frac{1}{2} \N_0} (2\ell +1) \sum_{m,n=1}^{2\ell+1} e^{im\mathscr{A}(t)}\widehat{u}(t,\ell)_{mn}\textsf{t}^\ell(x)_{nm}.
$$

In Propositions 4.6 and 4.7 in \cite{KMR19} it was proved that the map $\Psi_a$ is an automorphism of $\mathcal{D}'( \mathbb{T}^1 \times \St)$ and of $C^\infty(\mathbb{T}^1\times \St)$, and that 
$$P_0 \circ \Psi_a = \Psi_a \circ P.$$ 

Since $|e^{im\mathscr{A}(t)}|=1$, $\Psi_a$ is also an automorphism of $C^\omega(\mathbb{T}^1\times \mathbb{S}^3)$ (see Proposition \ref{functionanaly}). Thus $P$ is (GAH) if and only if $P_0$ is (GAH).

\end{proof}

\subsection{Building singular solutions}

Let us consider the missing case: $b \not\equiv 0$ and $b$ changes sign. We will prove that $P$ is not (GAH) following the technique of building singular solutions introduced by A. Bergamasco and used by several authors (for example, see: \cite{AGK18,AGKM18,AK19,AKM,Ber99,bergamasco2017existence,BZ05}). 

In our case, ``to build a singular solution" means to present a suitable real analytic function $f$ and a distributional solution $u$ of $Pu=f$ that it is not a real analytic function. The main difference between our construction and those used in most of the references previously cited is that we cannot use cut-off functions, which makes the construction process more delicate.

To start building the singular solution, observe that the zeros of the real analytic function $b$ are isolated. Since $b$ is $2\pi$-periodic and changes sign, we can assume, without loss of generality, that $b$ changes sign from minus to plus at the point $t_0=0$ and that $b_0 \leq 0$.

Thus $b$ is strictly positive on some open interval $]0,s[$ and there is $t^*\in \ ]0,2\pi[$ such that 
$$M\doteq  B(t^*) = \max_{t \in[0,2\pi]} B(t) >0.$$ 
 
\medskip
Define a $2\pi$-periodic real analytic function by 
$$\psi(t) = M+K(1-\cos(t))+i(a(0)\sin(t)-A(t^*)),$$ 
where $K>0$ is a constant that we will choose later. 

\begin{prop}\label{f_analytic}
Let $d_{\ell} = \left(1-e^{-2\pi(i\ell c_0+{q})}\right), \ \ell \in \frac{1}{2}\mathbb{N}$, and consider the sequence of functions \begin{equation}\label{f_singular_case}
\widehat{f}(t,\ell)_{m n} = 
\begin{cases}
d_{\ell} e^{-\ell \psi(t)}, & \mbox{if } m=n=\ell; \\
0								  & \mbox{otherwise.}
\end{cases}
\end{equation}
Then $\{\widehat{f}(\cdot,\ell)_{mn}\}$ is the sequence of partial Fourier coefficients of a real analytic function on $\TS$.
\end{prop}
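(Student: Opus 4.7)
The plan is to verify the two hypotheses of Proposition \ref{functionanaly} (as simplified by Remark \ref{noderivative}, since the $t$-dependence is already real analytic): that each partial Fourier coefficient $\widehat{f}(\cdot,\ell)_{mn}$ lies in $C^\omega(\mathbb{T}^1)$, and that there exist constants $C, B > 0$ with $|\widehat{f}(t,\ell)_{mn}| \leq C e^{-B\ell}$ uniformly in $t$, $\ell$, $m$, $n$. Because the sequence vanishes except at $m=n=\ell$, only the single family $\widehat{f}(t,\ell)_{\ell\ell} = d_\ell\, e^{-\ell\psi(t)}$ matters, and each such function is entire in $t$ (it is a constant times the exponential of a trigonometric polynomial), so real analyticity in $t$ is immediate.

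The estimate splits into two independent pieces. First I would bound the exponential factor: by definition of $\psi$,
\begin{equation*}
\bigl|e^{-\ell\psi(t)}\bigr| = e^{-\ell\,{\rm Re}\,\psi(t)} = e^{-\ell\bigl(M + K(1-\cos t)\bigr)} \leq e^{-M\ell},
\end{equation*}
where the last inequality uses $K > 0$ so that $K(1-\cos t) \geq 0$. This is where the positivity of $M = B(t^*)$ pays off: it supplies the required exponential decay rate $B = M > 0$.

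Next I would bound the prefactor $d_\ell = 1 - e^{-2\pi(i\ell c_0 + q)}$. Writing $c_0 = a_0 + i b_0$,
\begin{equation*}
\bigl|e^{-2\pi(i\ell c_0 + q)}\bigr| = e^{2\pi\ell b_0 - 2\pi\,{\rm Re}(q)},
\end{equation*}
and since the construction was set up so that $b_0 \leq 0$, the factor $e^{2\pi\ell b_0} \leq 1$ for every $\ell \geq 0$. Therefore $|d_\ell| \leq 1 + e^{-2\pi\,{\rm Re}(q)}$, a bound independent of $\ell$.

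Combining the two estimates yields
\begin{equation*}
\bigl|\widehat{f}(t,\ell)_{\ell\ell}\bigr| \leq \bigl(1 + e^{-2\pi\,{\rm Re}(q)}\bigr)\, e^{-M\ell}
\end{equation*}
for every $t\in \mathbb{T}^1$ and $\ell \in \tfrac12\N_0$, and all other matrix entries vanish. Proposition \ref{functionanaly}, together with Remark \ref{noderivative}, then identifies the sequence with a function in $C^\omega(\mathbb{T}^1\times\St)$. There is no real obstacle here; the work is concentrated in recognizing that the normalization of $\psi$ (in particular, the positive constant $M$ in its real part and the sign condition $b_0 \leq 0$ that feeds the bound on $d_\ell$) is exactly what is needed. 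The free parameter $K$ does not enter this analyticity proof — it can only help, since $K(1-\cos t) \geq 0$ — and will presumably be fixed later when one constructs the singular distributional solution $u$ of $Pu = f$.
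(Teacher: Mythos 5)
Your proof is correct and follows essentially the same route as the paper: bound $|d_\ell|$ by $1+e^{-2\pi\,{\rm Re}(q)}$ using $b_0\leq 0$, bound $|e^{-\ell\psi(t)}|$ by $e^{-M\ell}$ using $M>0$ and $K(1-\cos t)\geq 0$, and conclude via Proposition \ref{functionanaly} (with Remark \ref{noderivative}). No gaps.
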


\begin{proof}
Notice that
$$
|d_{\ell}| \leq 1+e^{-2\pi(\emph{\emph{\mbox{Re}}}({q})-\ell b_0)} = 1+e^{-2 \pi \emph{\emph{\mbox{Re}}}({q})}e^{\ell 2\pi b_0} \leq 1+e^{-2\pi \emph{\emph{\mbox{Re}}}({q})}=C.
$$

Thus,
$$
|\widehat{f}(t,\ell)_{\ell \ell}| = |d_{\ell}| e^{-\ell \emph{\emph{\mbox{Re}}}(\psi(t))} \leq C e^{-\ell(M+K(1-\cos(t)))} \leq C e^{-M \:\cdot\: \ell}
$$
and all the functions $\widehat{f}(\cdot,\ell)_{m n} \in C^\omega(\mathbb{T}^1)$. 

It follows from Proposition \ref{functionanaly} that
the sequence $\{\widehat{f}(t,\ell)_{mn}\}$ defines a function $f \in C^\omega(\mathbb{T}^1\times \mathbb{S}^3)$. 

\end{proof}

The next step is to construct a distribution $u \in \mathcal{D}'(\mathbb{T}^1\times \mathbb{S}^3)$, satisfying $Pu =f$, that is not a real analytic function.

By Remark \ref{finiteimpliesempty} and Proposition \ref{Ninfinite}, we can assume that $\mathcal{N}_0 = \varnothing$. So, the equation
$$
[\partial_t + imc_0+{q}](e^{im \mathscr{C}(t)}\widehat{u}(\:\cdot\:,\ell)_{mn}) = e^{im\mathscr{C}(t)}\widehat{f}(\:\cdot\:,\ell)_{mn}
$$
has only one real analytic periodic solution when $m=n=\ell$, which is given by
\begin{align*} 
\widehat{u}(t,\ell)_{\ell \ell} &=  d_{\ell}^{-1} \int_{0}^{2\pi} e^{-{q} s}e^{-i \ell \int_{t-s}^t c(\sigma)d\sigma} \widehat{f}(t-s,\ell)_{\ell \ell} ds \\
 & =  \int_{0}^{2\pi} e^{-{q} s} e^{-\ell[ \psi(t-s) + i (C(t) - C(t-s))]}ds.
\end{align*}

Let us denote $\Phi(t,s) \doteq \psi(t-s)+i( C(t)-C(t-s))$ and
$$ \varphi(t,s) \doteq - \mbox{Re}(\Phi(t,s)) = B(t)-B(t-s)-M-K(1-\cos(t-s)).$$

\begin{lemma}\label{phi<0}
	There exists a constant $K>0$ such that $\varphi(t,s) \leq 0$, for all $t,s \in [0,2\pi]$.
\end{lemma}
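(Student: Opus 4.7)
The plan is to compare the bracket $B(t) - B(t-s) - M$ with the nonnegative quantity $K(1 - \cos(t-s))$. Setting $\tau = t - s \in [-2\pi, 2\pi]$, the cosine term vanishes only when $\tau \in \{0, \pm 2\pi\}$, so I would fix a small $\delta > 0$ and split $[0, 2\pi]^2$ into four regions according to whether $\tau$ is within $\delta$ of $0$, of $2\pi$, of $-2\pi$, or bounded away from all three. In the last region, $1 - \cos\tau \geq 1 - \cos\delta > 0$ and $B(t) - B(\tau)$ is bounded above by $2\pi \sup|b|$, so any sufficiently large $K$ absorbs the bracket; the real work lies in showing that in the three degenerate regions the bracket itself is already nonpositive.

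For $\tau$ near $0$, the hypothesis that $b$ changes from minus to plus at $t_0 = 0$ means $b$ has an odd-order zero at $0$ with positive leading coefficient. A Taylor expansion then gives $B(\tau) \geq 0$ for $|\tau|$ small, and combined with $B(t) \leq M$ this yields $B(t) - B(\tau) - M \leq -B(\tau) \leq 0$. For $\tau \in [-\delta, 0)$ the same conclusion follows from the identity $B(\tau) = B(\tau + 2\pi) - 2\pi b_0$ together with the Taylor expansion of $B$ at $2\pi$, using $b(2\pi) = b(0) = 0$.

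For $\tau$ near $2\pi$, both $t$ and $\tau$ must lie within $\delta$ of $2\pi$, so $|B(t) - B(\tau)| \leq \delta \sup|b|$ and the bracket is bounded by $\delta \sup|b| - M < 0$ once $\delta$ is small. For $\tau$ near $-2\pi$, the constraints instead force $t \in [0, \delta]$, so both $B(t)$ and $B(\tau + 2\pi)$ are $O(\delta)$; the shift identity combined with $b_0 \leq 0$ then gives $B(t) - B(\tau) - M \leq 2\pi b_0 - M + O(\delta) \leq -M/2$ for $\delta$ small.

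The main obstacle I anticipate is the sign analysis of $B$ near $\tau = 0$ from the left, since $B$ is not $2\pi$-periodic when $b_0 \neq 0$; the natural extension of $B$ to negative arguments must be computed via its value near $2\pi$, and it is here that the odd order of the zero of $b$ at $0$ (equivalently, at $2\pi$) plays an essential role in forcing $B(\tau) \geq 0$ on both sides of the origin. Once this local nonnegativity is established, the remaining estimates are uniform-continuity bounds and an explicit admissible value of $K$ is easy to write down.
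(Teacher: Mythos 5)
Your proof is correct and follows essentially the same decomposition as the paper's: split $[0,2\pi]^2$ according to whether $\tau=t-s$ is near $0$, near $\pm 2\pi$, or bounded away from these degeneracies of $1-\cos\tau$, using $B\leq M$, $b_0\leq 0$ and the sign change of $b$ at $0$ in the degenerate regions and taking $K$ large only on the complement. The only minor difference is near $\tau=0$, where you note $B(\tau)\geq 0$ on both sides of the origin (from the minus-to-plus sign change), so the bracket is already nonpositive there, while the paper instead bounds the quotient $\bigl(B(t)-B(t-s)-M\bigr)/\bigl(1-\cos(t-s)\bigr)$ by a Taylor expansion and imposes $K>B''(0)$; both arguments are valid.
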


\begin{proof} We will split this proof in three cases.

\medskip \noindent {\bf Case 1:} There exists $\delta_2>0$ such that $\varphi(t,s) \leq 0$ for all $t,s \in [0,2\pi]$ with $2\pi -\delta_2 <|t-s| \leq 2\pi$. 

\noindent We have $|t-s| = 2\pi$ if and only if $(t,s)=(2\pi,0) \textrm{ or } (t,s) = (0,2\pi)$. Since
\begin{align*}
	\varphi(2\pi,0) &= B(2\pi)-B(2\pi)-M-K(1-\cos(2\pi)) = -M<0 \mbox{ and}\\
	\varphi(0,2\pi) &= B(0)-B(-2\pi)-M-K(1-\cos(-2\pi)) = 2\pi b_0-M<0,
\end{align*}
the desired result follows by continuity.

\bigskip \noindent {\bf Case 2:} There is $\delta_1>0$ such that $\varphi(t,s) \leq 0$ for all $t,s \in [0,2\pi]$ with $|t-s|<\delta_1$.	

For $t=s$ we have
	$$\varphi(t,t) = B(t)-B(0)-M-K(1-\cos(0))=B(t)-M\leq 0$$
and for $t \neq s$ in $[0,2\pi]$ we have
$$
\varphi(t,s) \leq 0 \iff \frac{B(t)-B(t-s)-M}{1-\cos(t-s)} \leq K.$$

Let us prove that for each fixed $t \in [0,2\pi]$ the function 
$$g(u)\doteq \frac{B(t)-B(u)-M}{1-\cos(u)}$$ 
has an upper bound on some neighborhood of $u=0$, which does not depend on $t$. 

Since $B(t)-M \leq 0$, for all $t$, $1-\cos(u)>0$, for $u \neq 0$, $B(0)=0$ and $B'(0)=b(0)=0$ then, by Taylor's formula, we have
\begin{align*}
g(u) \leq \frac{B(u)}{1-\cos(u)} & = \frac{1}{1-\cos(u)} \left[ B(0)+B'(0)u + \frac{B''(0)}{2}u^2+R_2(u)\right] \\
& =  \frac{B''(0)}{2}\frac{u^2}{1-\cos(u)}+\frac{R_2(u)}{1-\cos(u)}
\end{align*}
with $\lim\limits_{u \rightarrow 0} R_2(u)/u^2=0.$

\medskip
Since 
$$\displaystyle\lim_{u \rightarrow 0} \frac{u^2}{1-\cos(u)}=2 \ \mbox{ and }  \lim_{u \rightarrow 0} \frac{R_2(u)}{1-\cos(u)} = \lim_{u \rightarrow 0} \frac{R_2(u)}{u^2} \cdot \frac{u^2}{1-\cos(u)}  =0,
$$
hence, given $K_1 > B''(0),$ there is $\delta_1>0$ such that $g(u) \leq K_1$ if $|u|<\delta_1.$ 

Therefore
$$
	\varphi(t,s) = [B(t)-B(t-s)-M-K_1(1-\cos(t-s))] \leq 0, \mbox{ if } |t-s|<\delta_1.
$$

\bigskip \noindent {\bf Case 3:}  There exists $K>K_1$ such that $\varphi(t,s) \leq 0$, for all  $t,s \in [0,2\pi]$ with $\delta_1 \leq |t-s| \leq 2\pi-\delta_2$.

Notice that $1-\cos(t-s)>0$,  $(t,s)\in R = \{(t,s) \in [0,2\pi]^2 ;\delta_1 \leq |t-s| \leq 2\pi-\delta_2 \}$, and
$$\rho\doteq\min \{1-\cos(u); \delta_1\leq u \leq 2\pi-\delta_2\}>0.$$ 

Given any $K\geq K_1$, we have that, for any $(t,s) \in R$,
$$
	\varphi(t,s) \leq B(t)-B(t-s)-\rho K \leq 0 \ \Leftrightarrow \ K \geq \frac{B(t)-B(t-s)}{\rho}.
$$

Now, to obtain the last inequality we may increase $K$, if necessary. 
 
\end{proof}

\begin{prop}\label{u_distr}
The sequence
\begin{equation}\label{u_solution}
\widehat{u}(t,\ell)_{mn}= 
\begin{cases}
\displaystyle \int_{0}^{2\pi} e^{-{q} s} e^{-\ell \Phi(t,s)}ds, & \mbox{if } m=n=\ell; \\
0 & \mbox{otherwise},
\end{cases}
\end{equation}
with 
$$\Phi(t,s) \doteq \psi(t-s)+i( C(t)-C(t-s)), \mbox{ for } t,s \in [0,2\pi],$$ 
corresponds to a distribution $u \in \mathcal{D}'(\TS)$, which is a solution of $P u=f,$ with $f \in C^\omega(\TS)$ defined in \eqref{f_singular_case}.
\end{prop}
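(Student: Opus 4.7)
The plan is to verify two things: that the sequence $\{\widehat{u}(\cdot,\ell)_{mn}\}$ defines an element $u\in\mathcal{D}'(\TS)$, and that this $u$ solves $Pu=f$. I would handle the distribution claim through Proposition \ref{caracdist}, exploiting the inequality from Lemma \ref{phi<0} to produce a uniform $L^\infty$ bound on the nontrivial Fourier coefficients; and I would verify $Pu=f$ by differentiating under the integral together with one integration by parts in the $s$-variable, carefully evaluating the boundary terms.

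Concretely, for the distribution claim, only $\widehat{u}(t,\ell)_{\ell\ell}$ is nonzero, and Lemma \ref{phi<0} (applied with $K$ chosen as in the definition of $\psi$) gives $\mathrm{Re}(\Phi(t,s))=-\varphi(t,s)\geq 0$ on $[0,2\pi]^2$. Hence $|e^{-\ell\Phi(t,s)}|\leq 1$ for every $\ell\geq 0$, and combined with $|e^{-qs}|\leq e^{2\pi|q|}$ on $s\in[0,2\pi]$, this yields $|\widehat{u}(t,\ell)_{\ell\ell}|\leq C$ for some constant $C$ independent of both $t$ and $\ell$. Pairing with any $\varphi\in C^\infty(\mathbb{T}^1)$ then gives $|\langle\widehat{u}(\cdot,\ell)_{mn},\varphi\rangle|\leq 2\pi C\,p_0(\varphi)$, so \eqref{pk} holds with $K=0$ and $u\in\mathcal{D}'(\TS)$.

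For the equation $Pu=f$, since $\widehat{f}$ vanishes off $(m,n)=(\ell,\ell)$, it suffices to show $[\partial_t+i\ell c(t)+q]\widehat{u}(t,\ell)_{\ell\ell}=\widehat{f}(t,\ell)_{\ell\ell}$ for every $\ell$. The crucial identity is
$$\partial_t\Phi(t,s)+\partial_s\Phi(t,s)=ic(t),$$
which follows directly from $\Phi(t,s)=\psi(t-s)+i(C(t)-C(t-s))$. Differentiating under the integral and replacing $\partial_t\Phi$ by $ic(t)-\partial_s\Phi$ produces a term $-i\ell c(t)\widehat{u}(t,\ell)_{\ell\ell}$ plus $\int_0^{2\pi}e^{-qs}\partial_s\bigl(e^{-\ell\Phi(t,s)}\bigr)\,ds$. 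Integration by parts in $s$ converts the latter into $-q\,\widehat{u}(t,\ell)_{\ell\ell}$ plus the boundary contribution $\bigl[e^{-qs}e^{-\ell\Phi(t,s)}\bigr]_0^{2\pi}$. The $2\pi$-periodicity of $\psi$ together with $C(t)-C(t-2\pi)=2\pi c_0$ give $\Phi(t,0)=\psi(t)$ and $\Phi(t,2\pi)=\psi(t)+2\pi ic_0$, so this boundary term collapses to $(e^{-2\pi\gamma_\ell}-1)e^{-\ell\psi(t)}=-d_\ell e^{-\ell\psi(t)}=-\widehat{f}(t,\ell)_{\ell\ell}$, yielding the required identity.

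The main obstacle I anticipate is precisely the boundary evaluation: the factor $d_\ell$ placed in the definition \eqref{f_singular_case} of $\widehat{f}$ is tuned so that the telescoping in the integration by parts reproduces it exactly, and one must track the $2\pi$-periodicity carefully to see this cancellation succeed. Note that no Diophantine hypothesis is invoked here, because we only claim distributional regularity of $u$ (its failure to be analytic, which is the whole point of the construction, will be established in a subsequent step).
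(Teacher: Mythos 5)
Your argument is correct in substance, and the two halves compare with the paper as follows. For the distributional part you do exactly what the paper does: Lemma \ref{phi<0} gives ${\rm Re}\,\Phi(t,s)\geq 0$, hence a bound on $|\widehat{u}(t,\ell)_{\ell\ell}|$ uniform in $t$ and $\ell$, and Proposition \ref{caracdist} applies with $K=0$. For the equation $Pu=f$ you take a genuinely different (and more self-contained) route: the paper never verifies the equation directly, because it \emph{derives} \eqref{u_solution} as the unique periodic solution of the conjugated ODE \eqref{conjugation} via the explicit formula of Lemma \ref{solutionconstant} (using the standing assumption $\mathcal{N}_0=\varnothing$, so $\gamma_\ell\notin i\Z$ and $d_\ell\neq 0$), and then says the solution property holds ``by construction''. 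Your direct check — based on the identity $\partial_t\Phi+\partial_s\Phi=ic(t)$, differentiation under the integral and one integration by parts whose boundary term produces the factor $d_\ell=1-e^{-2\pi\gamma_\ell}$ built into \eqref{f_singular_case} — buys independence from Lemma \ref{solutionconstant} and from the hypothesis $d_\ell\neq 0$, at the cost of redoing a computation the paper gets for free from its construction.

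One bookkeeping caveat: as written your signs are inconsistent and would end with $[\partial_t+i\ell c(t)+q]\widehat{u}(t,\ell)_{\ell\ell}=-\widehat{f}(t,\ell)_{\ell\ell}$. Tracking carefully, replacing $\partial_t\Phi$ by $ic(t)-\partial_s\Phi$ gives $\partial_t\widehat{u}=-i\ell c(t)\widehat{u}-\int_0^{2\pi}e^{-qs}\partial_s\bigl(e^{-\ell\Phi}\bigr)ds$ (minus, not plus), and integrating by parts then yields $-q\widehat{u}-\bigl[e^{-qs}e^{-\ell\Phi(t,s)}\bigr]_{s=0}^{s=2\pi}$; since $\Phi(t,0)=\psi(t)$ and $\Phi(t,2\pi)=\psi(t)+2\pi ic_0$, the boundary contribution is $+\,d_\ell e^{-\ell\psi(t)}=\widehat{f}(t,\ell)_{\ell\ell}$, as required. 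So the mechanism you describe is exactly right; only the intermediate signs need correcting.
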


\begin{proof}
By Lemma \ref{phi<0}, $\varphi(t,s)= -{\rm Re}(\Phi(t,s)) \leq 0$, for all $t,s \in [0,2\pi]$, then
$$
|\widehat{u}(t,\ell)_{\ell \ell}| \leq \int_{0}^{2\pi} e^{-{\rm Re}({q})s} e^{\ell \varphi(t,s)}ds \leq  2\pi e^{|2\pi{\rm Re}(q)|}
$$
and the sequence $\{\widehat{u}(\:\cdot\:,\ell)\}$ defines a distribution that, by construction, is a solution of $P u=f,$ with $f$ defined in \eqref{f_singular_case}. 

\end{proof}

\begin{prop}\label{u_not_analytic}
	There is no real analytic function defined on $\TS$ whose sequence of partial Fourier coefficients is given by \eqref{u_solution}.	
\end{prop}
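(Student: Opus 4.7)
I will show that the specific coefficient $\widehat{u}(t^*,\ell)_{\ell\ell}$ decays like $\ell^{-1/2}$, not exponentially, as $\ell\to\infty$. By Proposition \ref{functionanaly} and Remark \ref{noderivative} this precludes $u\in C^\omega(\TS)$. The key is that at $t=t^*$ the phase $s\mapsto\Phi(t^*,s)$ has an interior critical point at $s=t^*$ at which $\Phi$ vanishes, so Laplace's method (steepest descent with complex phase) gives only $\ell^{-1/2}$ decay.

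\textbf{Step 1 (the critical point).} I first check that $\Phi(t^*,t^*)=0$: using $\psi(0)=M-iA(t^*)$ and $C(t^*)=A(t^*)+iM$, this is immediate. Differentiating in $s$,
\begin{align*}
\partial_s\Phi(t^*,s)\bigl|_{s=t^*}&=-\psi'(0)+ic(0)=-ia(0)+i(a(0)+ib(0))=-b(0)=0,\\
\partial_s^2\Phi(t^*,s)\bigl|_{s=t^*}&=\psi''(0)-ic'(0)=K+b'(0)-ia'(0),
\end{align*}
using $b(0)=0$. Since $b$ changes sign at $0$ from minus to plus, $b'(0)\geq 0$; together with $K>0$ this makes the real part of the second derivative strictly positive. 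Thus $\varphi(t^*,\cdot)=-\mathrm{Re}\,\Phi(t^*,\cdot)$ has a non-degenerate local maximum equal to $0$ at $s=t^*$.

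\textbf{Step 2 (uniqueness of the maximum).} Directly,
$$\varphi(t^*,s)=-B(t^*-s)-K\bigl(1-\cos(t^*-s)\bigr),$$
and for $s\in[0,2\pi]$ the factor $1-\cos(t^*-s)$ vanishes only at $s=t^*$ since $t^*\in(0,2\pi)$. The analysis of Lemma \ref{phi<0} tolerates increasing $K$, and doing so ensures the strict inequality $\varphi(t^*,s)<0$ for every $s\in[0,2\pi]\setminus\{t^*\}$, so the maximum of the interior critical point is unique and non-degenerate.

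\textbf{Step 3 (Laplace asymptotics and conclusion).} Following \cite{Sjostrand}, I split
$$\widehat{u}(t^*,\ell)_{\ell\ell}=\int_0^{2\pi}e^{-qs}e^{-\ell\Phi(t^*,s)}\,ds$$
into a neighborhood $|s-t^*|<\epsilon$ and its complement. On the complement Step~2 yields $\varphi(t^*,s)\leq-\delta<0$, giving an exponentially small contribution $O(e^{-\delta\ell})$. On the neighborhood the Taylor expansion
$$\Phi(t^*,s)=\tfrac{1}{2}\bigl(K+b'(0)-ia'(0)\bigr)(s-t^*)^2+O\!\bigl((s-t^*)^3\bigr)$$
combined with the substitution $s=t^*+\ell^{-1/2}\tau$ produces
$$\widehat{u}(t^*,\ell)_{\ell\ell}\sim e^{-qt^*}\sqrt{\frac{2\pi}{\ell\bigl(K+b'(0)-ia'(0)\bigr)}}\qquad(\ell\to\infty),$$
where the square root is chosen with positive real part. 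In modulus, $|\widehat{u}(t^*,\ell)_{\ell\ell}|\sim C\ell^{-1/2}$, which cannot be dominated by $Ce^{-B\ell}$ for any $B>0$, so Proposition \ref{functionanaly} is violated and no such analytic function exists.

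\textbf{Main obstacle.} The delicate part is not Step~1 (a routine computation) but the rigorous asymptotic analysis of a complex-phase integral: one must verify that the oscillatory imaginary part $\mathrm{Im}\,\Phi$ does not cancel the leading Gaussian contribution, and that the cubic error term really yields a correction of order $\ell^{-3/2}$ rather than spoiling the lower bound on $|\widehat{u}(t^*,\ell)_{\ell\ell}|$. This is precisely the kind of asymptotic-at-infinity estimate the authors flag in the introduction as being imported from \cite{Sjostrand}, and it replaces the cut-off arguments available in the $C^\infty$ setting but unavailable in the analytic category.
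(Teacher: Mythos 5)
Your proposal is correct and follows essentially the same route as the paper: locate the critical point of the phase at $s=t^*$ where $\Phi(t^*,t^*)=0$ and $\partial_s^2\Phi(t^*,t^*)=K+b'(0)-ia'(0)\neq 0$, make $\mathrm{Re}\,\Phi(t^*,s)>0$ for $s\neq t^*$ by enlarging $K$, split the integral into a neighborhood of $t^*$ and its complement, bound the far part exponentially, and apply Sj\"ostrand's complex-phase stationary phase (the paper's Theorem 2.8 citation) to obtain the $\ell^{-1/2}$ lower bound that contradicts the exponential decay required by Proposition \ref{functionanaly}. The only differences are cosmetic (you keep the variable $s$ and record the explicit leading constant $e^{-qt^*}\sqrt{2\pi/(\ell(K+b'(0)-ia'(0)))}$, while the paper substitutes $\sigma=t^*-s$ and verifies holomorphy of the extended phase before citing Sj\"ostrand).
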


\begin{proof}
	By proving that the sequence of numbers $\{\widehat{u}(t^*,\ell)\}_{\ell}$ does not decay exponentially, we will prove that the distribution $u \in \mathcal{D}'(\TS)$, defined in the last proposition, cannot be a real analytic function. 
	
First, observe that for $s \in [0,2\pi]$, we have
\begin{align*}
\Phi(t^*,s) =	& \big[B(t^* -s)+K(1-\cos(t^* -s))\big] + i \big[a(0)\sin(t^* -s)-A(t^* -s)\big],
\end{align*}
in particular, $\Phi(t^*,t^*)=0$. \vspace{0.3cm}
	
\noindent Since we are assuming that $b$ is real analytic and changes sign from $-$ to $+$ at $t_0=0$, there exists $\delta^*>0$ such that $B(u)>0$ for all $u$ with $0<|u|<\delta^*$. In particular,  
$$B(t^*-s)>0, \mbox{ whenever }  0<|t^*-s|<\delta^*,$$ 
which implies that 
$${\rm Re}(\Phi(t^*,s)) >0,  \mbox{ in } 0<|t^*-s|<\delta^*.$$
Note that, since $t^* \in (0,2\pi)$, we can choose $\delta^*$ such that $(t^*-\delta^*,t^*+\delta^*) \subset [0,2\pi]$.

In the region $|t^*-s|\geq \delta^*$, we have $1-\cos(t^*-s)\neq0$. Setting 
$$\rho^*=\min\{B(t^*-s); |t^*-s|\geq \delta^*, s \in [0,2\pi]\},$$
we have 
$$
{\rm Re}(\Phi(t^*,s)) = B(t^*-s)+K(1-\cos(t^*-s)) \geq \rho^*+K(1-\cos(t^*-s)).
$$

\noindent However,
$$
\rho^*+K(1-\cos(t^*-s))>0  \iff K> \frac{-\rho^*}{1-\cos(t^*-s)}.
$$

\noindent Therefore, by increasing $K$, if necessary, we have that 
\begin{equation}\label{phi(t*,s)>0}
{\rm Re}(\Phi(t^*,s))  >0 \mbox{ for all } s \in [0,2\pi], \mbox{with } s\neq t^*.
\end{equation}

\medskip
\noindent The next step is to study the asymptotic behavior of the sequence
\begin{equation}\label{u=I+J}
\widehat{u}(t^*,\ell)_{\ell \ell} = \int_{0}^{2\pi}e^{-qs}e^{- \ell  \Phi(t^*,s)}ds = I_{\ell,\delta}+J_{\ell,\delta},
\end{equation}
where

\begin{align*}
I_{\ell,\delta} = & \int_{R_\delta}e^{-qs}e^{-\ell \Phi(t^*,s)}ds \\
				   = & \int_{\widetilde{R}_\delta}e^{-q(t^*-\sigma)} e^{-\ell[ B(\sigma)+K(1-\cos(\sigma)) +i(a(0)\sin(\sigma)-A(\sigma))]}d\sigma
\end{align*}
and
\begin{align*}
J_{\ell,\delta} = & \int_{(R_\delta)^c}e^{-qs} e^{-\ell \Phi(t^*,s)}ds \\ 
					   = & \int_{(\widetilde{R}_\delta)^c} e^{-q(t^*-\sigma)} e^{-\ell[ B(\sigma)+K(1-\cos(\sigma)) +i(a(0)\sin(\sigma)-A(\sigma))]}d\sigma,
\end{align*}
with $R_\delta = \{s \in [0,2\pi]; |t^*-s|<\delta \}$ and $\widetilde{R}_\delta = \{ \sigma \in [t^*-2\pi, t^*]; |\sigma|<\delta \}$. 
Now, if $|\sigma|\geq \delta$ and $\rho = \min_{|\sigma|\geq \delta}B(\sigma)$ then
$$
B(\sigma)+K(1-\cos(\sigma)) \geq \rho+K(1-\cos(\delta)) = C_{K,\delta}>0
$$
therefore
\begin{equation}\label{JLdelta}
|J_{\ell,\delta}| \leq C \int_{(\widetilde{R}_\delta)^c}  e^{-\ell(\rho+K(1-\cos(\delta)))}d\sigma \leq \widetilde{C} e^{-\ell C_{K,\delta}}.
\end{equation}

\medskip
\noindent Now let us analyze the integral $I_{\ell,\delta}$. Notice that
$$
I_{\ell,\delta} = \int_{\widetilde{R}_\delta}e^{q(t^*-\sigma)} e^{-\ell  \phi(\sigma)}d\sigma,
$$
where
\begin{align*}
\phi(\sigma)  & = (B(\sigma)+K(1-\cos(\sigma)))+i(a(0)\sin(\sigma)-A(\sigma)) \\ 
& = -iC(\sigma)+K(1-\cos(\sigma))+ia(0)\sin(\sigma).
\end{align*}

\noindent Replacing $\sigma$ by the complex variable $z=\sigma+i\tau$ in the above expression, we obtain a function $\phi(z)$ holomorphic on some square $\{z=\sigma+i\tau \in \mathbb{C}; |\sigma|,|\tau|<\delta\}$. Thus
\begin{align*}
\phi'(z) & =  -i(a(z)+ib(z))+K\sin(z)+ia(0)\cos(z), \mbox{ and} \\
\phi''(z) & =  -i(a'(z)+ib'(z))+K\cos(z)-ia(0)\sin(z),
\end{align*}
and
\begin{align*}
\phi(0) & =  -i(A(0)+iB(0)) = 0, \\
\phi'(0) & = -ia(0)+ib(0)+ia(0)=0, \mbox{ and} \\
\phi''(0) & =  -i(a'(0)+ib'(0))+K = K-ic'(0) \neq 0, \mbox{ if $K$ is big enough.}
\end{align*}

\noindent Since $\phi(z)$ is holomorphic in some small neighborhood of the origin, and $z=0$ is the only critical point of $\phi(z)$ on the square $\{z \in \mathbb{C}; |z|< \delta\}$, for $\delta$ sufficiently small, then the function $\phi(z)$ fits the hypothesis of Theorem 2.8 in \cite{Sjostrand} and there exists $\varepsilon >0$ such that
\begin{equation}\label{ILdelta}
I_{\ell,\delta} = \sqrt{2\pi} \dfrac{1}{\sqrt{2\ell}}+\dfrac{1}{\varepsilon}e^{-2\ell \varepsilon},
\end{equation}
for all $\ell \in \frac{1}{2} \mathbb{N}$.

\noindent It follows from \eqref{u=I+J}, \eqref{JLdelta} and \eqref{ILdelta} that 
\begin{align*}
|\widehat{u}(t^*,\ell)|  = & |I_{\ell,\delta}+J_{\ell,\delta}| \\ 
\geq & \sqrt{2\pi}\frac{1}{\sqrt{2\ell}}+\frac{1}{\varepsilon}e^{-2\ell \varepsilon}-2\pi e^{-\ell C_{K,\delta}} \\
= & O \left( \frac{1}{\sqrt{\ell}}\right), 
\end{align*}
when $\ell \to \infty$.

\noindent Since $\widehat{u}(t^*,\ell)$ does not decay exponentially, the sequence of functions $\{\widehat{u}(t,\ell)_{mn}\}$ does not correspond to any real analytic function on $\TS$, what finishes the proof.

\end{proof}

Finally, from Propositions \ref{f_analytic}, \ref{u_distr} and \ref{u_not_analytic} we have the following theorem.

\begin{thm}\label{notgah} 
	If $b$ changes sign, then $P= \partial_t + (a(t)+ib(t))(t) \partial_0 + q$ is not (GAH), for all ${q} \in \mathbb{C}$.
\end{thm}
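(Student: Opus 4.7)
The plan is to prove non-(GAH) by exhibiting a \emph{singular solution}: a real analytic right-hand side $f$ on $\TS$ together with a distribution $u\in\mathcal{D}'(\TS)$ solving $Pu=f$ that fails to be real analytic. Before building $f$, I first reduce to the generic subcase $\mathcal{N}_0=\varnothing$: if $\mathcal{N}_0$ is infinite, Proposition \ref{Ninfinite2} already kills (GAH), so I assume $\mathcal{N}_0=\varnothing$, which by the analysis after Lemma \ref{solutionconstant} ensures every ODE $[\partial_t + imc_0 + q]v = h$ has a unique periodic solution given by an explicit integral. Up to a translation I also take $t_0=0$ to be a sign-change of $b$ from $-$ to $+$ with $b_0\le 0$, and I pick $t^*\in(0,2\pi)$ with $M\doteq B(t^*)=\max B>0$.

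Next I construct $f$ via its partial Fourier coefficients, concentrating all mass on the highest diagonal entry $m=n=\ell$:
\begin{equation*}
\widehat{f}(t,\ell)_{\ell\ell}=d_\ell\, e^{-\ell\psi(t)},\qquad d_\ell=1-e^{-2\pi(i\ell c_0+q)},
\end{equation*}
where the phase is $\psi(t)=M+K(1-\cos t)+i\bigl(a(0)\sin t-A(t^*)\bigr)$ with $K>0$ to be chosen large. Since $b_0\le 0$, the factor $d_\ell$ stays bounded in $\ell$, and $\mathrm{Re}\,\psi(t)\ge M$ furnishes the uniform estimate $|\widehat{f}(t,\ell)_{\ell\ell}|\le Ce^{-M\ell}$. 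By Proposition \ref{functionanaly} this forces $f\in C^\omega(\TS)$. Because $\mathcal{N}_0=\varnothing$, the ODE \eqref{conjugation} on the $(\ell,\ell)$-component admits the unique periodic solution
\begin{equation*}
\widehat{u}(t,\ell)_{\ell\ell}=\int_0^{2\pi} e^{-qs}\,e^{-\ell\,\Phi(t,s)}\,ds,\qquad \Phi(t,s)\doteq\psi(t-s)+i\bigl(C(t)-C(t-s)\bigr),
\end{equation*}
and zero elsewhere. To make $u$ a distribution I need $\varphi(t,s)\doteq-\mathrm{Re}\,\Phi(t,s)\le 0$ throughout $[0,2\pi]^2$; a Taylor expansion of $B$ at $0$ (using $B(0)=0$, $B'(0)=b(0)=0$) controls the diagonal $|t-s|$ small, the boundary $|t-s|$ near $2\pi$ is controlled by the constant $-M$ in $\psi$, and the intermediate regime $\delta_1\le|t-s|\le 2\pi-\delta_2$ is absorbed by enlarging $K$. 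This gives a uniform bound in $\ell$, so $u\in\mathcal{D}'(\TS)$ and $Pu=f$ by construction.

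The final, and most delicate, step is to show that $\widehat{u}(t^*,\ell)_{\ell\ell}$ decays only polynomially in $\ell$. At $t=t^*$ the phase vanishes at $s=t^*$ and my choice of $\psi$ forces $\partial_s\Phi(t^*,t^*)=0$ and $\partial_s^2\Phi(t^*,t^*)=K-ic'(0)\ne 0$ for $K$ large. On the complementary region, by further increasing $K$ (using $\rho^*=\min_{|t^*-s|\ge\delta^*}B(t^*-s)$ together with $1-\cos$ strictly positive there), I arrange $\mathrm{Re}\,\Phi(t^*,s)>0$ for every $s\ne t^*$. Splitting the integral as $I_{\ell,\delta}+J_{\ell,\delta}$, the tail $J_{\ell,\delta}$ decays exponentially, while $I_{\ell,\delta}$ falls under the complex-analytic stationary phase machinery of Sj\"ostrand (Theorem 2.8 in \cite{Sjostrand}), yielding $I_{\ell,\delta}\sim\sqrt{2\pi/(2\ell)}$ up to an exponentially small error. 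Thus $|\widehat{u}(t^*,\ell)_{\ell\ell}|=O(\ell^{-1/2})$ but not exponentially small, so by Proposition \ref{functionanaly} $u\notin C^\omega(\TS)$.

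The main obstacle is the simultaneous calibration of the single parameter $K$: it must be large enough to guarantee $\varphi\le 0$ globally (so $u$ exists as a distribution) and also large enough to isolate $s=t^*$ as a true interior non-degenerate critical point with $\mathrm{Re}\,\Phi(t^*,\cdot)>0$ elsewhere (so the Laplace asymptotics applies). Since the analytic category forbids cut-off functions, unlike in the $C^\infty$ arguments of \cite{AGKM18,AK19,bergamasco2017existence}, the construction has to be globally smooth in $s$ and lean entirely on the complex stationary phase estimate from \cite{Sjostrand} to extract the precise $\ell^{-1/2}$ behavior.
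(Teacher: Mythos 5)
Your proposal is correct and follows essentially the same route as the paper: the same choice of $\psi$ and of the analytic datum $\widehat{f}(t,\ell)_{\ell\ell}=d_\ell e^{-\ell\psi(t)}$, the same explicit periodic solution $\widehat{u}(t,\ell)_{\ell\ell}=\int_0^{2\pi}e^{-qs}e^{-\ell\Phi(t,s)}\,ds$ with the sign condition $\varphi\le 0$ secured by enlarging $K$ (diagonal via Taylor expansion of $B$, boundary via $-M$, intermediate region via $K$), and the same splitting $I_{\ell,\delta}+J_{\ell,\delta}$ with Sj\"ostrand's complex stationary phase giving the $\ell^{-1/2}$ lower bound at $t=t^*$.
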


\section{Final Remarks and examples}

In this section we present some improvements in the results of the previous sections and also build some examples of global analytic hypoelliptic operators.

Our first result establishes a relation between the global analytic hypoellipticity of $P$ and $P_0$ defined in \eqref{Pq} and \eqref{P0q} respectively.

\begin{thm}\label{relation} If $P$ is (GAH), then $P_0$ is also (GAH).	
\end{thm}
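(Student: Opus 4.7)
The plan is to prove the contrapositive by a case analysis on $b$. If $b\equiv 0$, then $P=\partial_t+a(t)\partial_0+q$, and the conclusion is immediate from Theorem~\ref{equivalence2}, which asserts that $P$ and $P_0$ are simultaneously (GAH) in this situation. So the substance of the proof lies in the case $b\not\equiv 0$. Here the first move is to apply Theorem~\ref{notgah} in its contrapositive form: the hypothesis that $P$ is (GAH) rules out any sign change of $b$, and since $b$ is real analytic and not identically zero this also forces $b_0\neq 0$.

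Next, I would use Proposition~\ref{Ninfinite2} together with Remark~\ref{finiteimpliesempty} to conclude that $\mathcal{N}_0=\varnothing$. Since $\mathcal{N}_0=\varnothing$ is equivalent to one of \eqref{C1} or \eqref{C2}, and $b_0\neq 0$ excludes \eqref{C2}, condition \eqref{C1} must hold. In view of Proposition~\ref{gahconst} applied to $P_0$, it then remains only to verify that $P_0$ satisfies the \eqref{ADC} condition.

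The verification is an elementary computation. From $k+c_0 m-iq=(k+a_0 m+\mbox{Im}(q))+i(b_0 m-\mbox{Re}(q))$ one reads off $|k+c_0 m-iq|\geq |b_0 m-\mbox{Re}(q)|$. If $\mbox{Re}(q)/b_0\notin\tfrac{1}{2}\mathbb{Z}$, then, since $m\in\tfrac{1}{2}\mathbb{Z}$, the quantity $|b_0 m-\mbox{Re}(q)|$ is bounded below by the positive constant $|b_0|\cdot\mathrm{dist}(\mbox{Re}(q)/b_0,\tfrac{1}{2}\mathbb{Z})$. Otherwise $m_0\doteq\mbox{Re}(q)/b_0\in\tfrac{1}{2}\mathbb{Z}$, and \eqref{C1} guarantees $\mbox{Im}(q)+a_0 m_0\notin\mathbb{Z}$; for $m=m_0$ this yields $|k+a_0 m_0+\mbox{Im}(q)|\geq\mathrm{dist}(a_0 m_0+\mbox{Im}(q),\mathbb{Z})>0$ uniformly in $k\in\mathbb{Z}$, while for $m\neq m_0$ in $\tfrac{1}{2}\mathbb{Z}$ we simply get $|b_0 m-\mbox{Re}(q)|\geq |b_0|/2$. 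In either scenario $|k+c_0 m-iq|$ is bounded below by a positive constant independent of $(k,\ell,m)$, so \eqref{ADC} holds trivially with any $B>0$ and a suitable $K_B$.

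The only non-elementary input is Theorem~\ref{notgah}, which carries the weight of the singular-solution construction of the previous section and is what forces $b$ to be single-signed. Once that geometric constraint is in place, the Diophantine side of the problem collapses: the imaginary part $b_0 m-\mbox{Re}(q)$ stays a definite distance from zero as $m$ ranges over $\tfrac{1}{2}\mathbb{Z}$, so no small-divisor phenomenon can occur and \eqref{ADC} is automatic. I do not expect any further subtle obstacle; beyond correctly invoking Theorem~\ref{notgah}, the argument is bookkeeping from the previously established propositions.
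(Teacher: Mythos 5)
Your proposal is correct, but it runs in the opposite logical direction from the paper and uses a different key computation. The paper proves the contrapositive: assuming $P_0$ is not (GAH), it either gets $\mathcal{N}_0$ infinite (and invokes Proposition~\ref{Ninfinite2}), or it takes a sequence $(k_j,\ell_j,m_j)$ violating \eqref{ADC} and argues, via the discreteness of the values when $m_j$ is bounded, that $m_j\to\infty$ and hence ${\rm Re}(q)=b_0=0$; this forces $b\equiv 0$ or $b$ to change sign, and then Theorem~\ref{equivalence2} or Theorem~\ref{notgah} yields that $P$ is not (GAH). You instead argue directly: from $P$ (GAH) you extract, via Theorem~\ref{notgah}, Proposition~\ref{Ninfinite2} and Remark~\ref{finiteimpliesempty}, that $b$ is single-signed with $b_0\neq 0$ and that \eqref{C1} holds, and then you verify \eqref{ADC} for $P_0$ by the dichotomy ${\rm Re}(q)/b_0\in\tfrac12\mathbb{Z}$ or not, obtaining a uniform positive lower bound on $|k+c_0m-iq|$. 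That verification is exactly the computation the paper defers to Remark~\ref{last-rem} (where it is used for Theorem~\ref{mainthm}, not for Theorem~\ref{relation}), so your route in effect merges Theorem~\ref{relation} with that remark. What your approach buys is a cleaner, small-divisor-free argument with explicit constants; what the paper's approach buys is that the delicate case analysis on $q$ and $b_0$ is avoided in this proof and the burden is shifted to the asymptotic analysis of a single violating sequence. Both arguments rest on the same external inputs (Propositions~\ref{gahconst} and \ref{Ninfinite2}, Theorems~\ref{equivalence2} and \ref{notgah}), and I see no gap in yours: in particular your claims that a nonnegative, not identically zero analytic $b$ has $b_0\neq 0$, that finiteness of $\mathcal{N}_0$ gives $\mathcal{N}_0=\varnothing$, and that $|m-m_0|\geq\tfrac12$ for $m\neq m_0$ in $\tfrac12\mathbb{Z}$ are all sound.
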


\begin{proof} Suppose that $P_0$ is not (GAH) then, by Proposition \ref{gahconst}, either the set $\mathcal{N}_0$ has infinitely many elements or $P_0$ does not satisfy the \eqref{ADC} condition. 

If $\mathcal{N}_0$ is infinite, by Proposition \ref{Ninfinite2},  $P$ is not (GAH) and we are done. 

Now, if $\mathcal{N}=\varnothing$ and $P_0$ does not satisfy the \eqref{ADC} condition, then there exist a constant $B>0$, a sequence $\{({k}_j,\ell_j) \in \mathbb{Z}\times \frac{1}{2}\mathbb{N}; j\in\N\}$ and indexes $-\ell_j \leq m_j \leq \ell_j$ such that
\begin{equation}\label{conv-to-zero}
	0  <  |({k}_j+{\rm Im}(q)+m_j a_0)-i(\emph{\emph{\mbox{Re}}}({q})-m_j b_0)| 
	\leq \frac{1}{j} e^{-B(|{k}_j|+{\ell_j})}
\end{equation}
for all $ j \in \N$.

If the sequence $\{m_j\}_{j\in\N}$ were bounded, since the right hand side in \eqref{conv-to-zero} goes to zero and the midterm assumes values only in a discrete set, then we must have $m_j = 0$ for big enough values of $j$, which would contradict the fact that the midterm is positive for all $j$. Hence we have $ m_j \rightarrow \infty $.

However, the convergence to zero in \eqref{conv-to-zero} also implies that ${\rm Re}({q})-m_j b_0 \rightarrow 0$ when $m_j \to \infty$ and this implies that ${\rm Re}({q})=b_0=0$. Therefore $b\equiv0 $ or $b$ changes sign.

If $b\equiv 0$, $P$ is not (GAH) by Proposition \ref{equivalence2} and if $b$ changes sign, $P$ is not (GAH) by Theorem \ref{notgah}. In any case, we concluded that $P$ is not (GAH).

\end{proof}

In view of the last result, we can improve Theorem \ref{Lisgah} by getting an equivalence.

\begin{thm}\label{mainthm} 
	Given $q \in \mathbb{C}$ and $c\in C^\omega(\mathbb{T}^1)$, the operator $P = \partial_t +c(t)\partial_0 +{q}$ is (GAH) if and only if the three conditions bellow are satisfied:
	\begin{enumerate}
		\item[i.] ${\rm Im}\, c(t)$ does not change sign;
		\item[ii.] $q\in\C$ satisfies either \eqref{C1} or \eqref{C2};		 
		\item[iii.] $P_0 = \partial_t+c_0 \partial_0+{q}$ satisfies the \eqref{ADC} condition.
	\end{enumerate}
\end{thm}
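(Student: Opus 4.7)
The plan is to assemble this equivalence directly from the results proved in Sections 3 and 4, handling the two implications separately and splitting each according to whether $b\equiv 0$ or $b\not\equiv 0$.

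For the sufficiency (i)--(iii) $\Rightarrow$ (GAH), I would first observe that (ii) forces $\mathcal{N}_0=\varnothing$: indeed, the analysis preceding the definitions of \eqref{C1} and \eqref{C2} shows that $\gamma_m\notin i\mathbb{Z}$ for every admissible $m$ is equivalent to condition \eqref{C1} (in case $b_0\neq 0$) or \eqref{C2} (in case $b_0=0$), and by Remark \ref{finiteimpliesempty} this is equivalent to $\mathcal{N}_0$ being finite. Then I would split into two cases. If $b\not\equiv 0$, then by (i) the function $b$ does not change sign; combined with (iii) and with (ii) (which must be \eqref{C1} since $b\not\equiv 0$ implies we are in the regime handled by Theorem \ref{Lisgah}, modulo the sign hypothesis on $b_0$ used there, which can be arranged up to a change of variable), Theorem \ref{Lisgah} gives that $P$ is (GAH). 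If $b\equiv 0$, then (ii) reduces to \eqref{C2} and together with (iii) it says that $P_0$ satisfies all the hypotheses of Proposition \ref{gahconst}, so $P_0$ is (GAH); then by Theorem \ref{equivalence2}, $P$ itself is (GAH).

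For the necessity (GAH) $\Rightarrow$ (i)--(iii), I would start from Theorem \ref{relation}, which tells me that if $P$ is (GAH) then so is $P_0$. By Proposition \ref{gahconst} applied to $P_0$, this gives simultaneously that $\mathcal{N}_0$ is finite and that $P_0$ satisfies \eqref{ADC}; the latter is exactly (iii), while the former, by Remark \ref{finiteimpliesempty}, means $\mathcal{N}_0=\varnothing$, and the computation recalled above then yields (ii). Finally, (i) follows by contraposition from Theorem \ref{notgah}: if $b$ changed sign, then $P$ would fail to be (GAH), contradicting our standing hypothesis.

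The proof is essentially an assembly of earlier pieces, so there is no major technical obstacle; the only point requiring a little care is bookkeeping the equivalence ``$\mathcal{N}_0=\varnothing \Leftrightarrow$ \eqref{C1} or \eqref{C2}'', which has already been worked out in the discussion preceding these conditions, and the reduction to the normalization $b_0\leq 0$ (or $b_0<0$) used in Theorems \ref{Lisgah} and in the singular solution construction; in the final write-up I would simply cite those reductions and the relevant numbered results, so the resulting proof is short.
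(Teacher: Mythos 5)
Your proposal is correct and follows essentially the same route as the paper, which states Theorem \ref{mainthm} precisely as an assembly of Theorem \ref{Lisgah} (with the observation that $b\not\equiv 0$ not changing sign forces $b_0\neq 0$, so (ii) is \eqref{C1}), Theorem \ref{equivalence2} together with Proposition \ref{gahconst} in the case $b\equiv 0$, Theorem \ref{notgah} for the sign condition, Theorem \ref{relation} plus Proposition \ref{gahconst} and Remark \ref{finiteimpliesempty} for the necessity of (ii) and (iii), and the equivalence $\mathcal{N}_0=\varnothing \Leftrightarrow$ \eqref{C1} or \eqref{C2} worked out before those conditions were introduced. No gaps; your bookkeeping matches the paper's intended argument.
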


\begin{remark} \label{last-rem}
	Although uncommon, in the last theorem, we included the zero constant function among those functions that do not change sign.
	
	In view of Proposition \ref{gahconst}, when ${\rm Im}\, c(t)\equiv 0$, conditions $ii.$ and $iii.$ together are equivalent to say that $P_0$ is (GAH). 
	
	Let us prove that conditions $i.$ and $ii.$ together imply condition $iii.$, when \linebreak ${\rm Im}\, c(t)\not\equiv 0$.
	
	Recall that condition $ii.$ means that, for all $m \in \frac{1}{2}\mathbb{Z}$ and $k \in \mathbb{Z}$, we have
$$
\gamma_{k,m}=k+c_0m+iq = (k+a_0m+{\rm Im}(q))+i(mb_0-{\rm Re}(q)) \neq 0.
$$
Besides, since $b \not\equiv 0$ and does not change sign, we have $b_0 \neq 0$. Now let us split the proof in two cases: 
\begin{enumerate}
	\item ${\rm Re}(q)/b_0 \in \frac{1}{2} \mathbb{Z}$. \\[3mm]	
	In this case, we must have ${\rm Re}(\gamma_{k,m}) \neq 0$ for $m={\rm Re}(q)/b_0$, which implies $a_0m+{\rm Im}(q) \notin \mathbb{Z}$ and therefore
	$$
	\min_{k\in\Z}\left\{|k + a_0m+{\rm Im}(q)|\right\}=C_1>0,
	$$
	and so 
	$$|\gamma_{k,m}| \geq C_1 \geq C_1 e^{-B(|k|+|m|)}.$$
	
	\
	
	\item ${\rm Re}(q)/b_0 \notin \frac{1}{2} \mathbb{Z}$.\\[3mm]	
	If $m=0$, then 
	$$|b_0m-{\rm Re}(q)| = |{\rm Re}(q)|>0,$$ 
	and if $m \in \frac{1}{2}\mathbb{Z}\setminus \{0\}$, then $mb_0-{\rm Re}(q) \neq 0$ and
	$$
	\min \left\{|mb_0 - {\rm Re}(q)|; m \in \tfrac{1}{2} \mathbb{Z}\setminus\{0\} \right\}=\tilde{C}_2>0.
	$$
	Therefore, for all $k$ and $m$, we have 
	$$|\gamma_{k,m}| \geq |b_0m-{\rm Re}(q)| \geq \min\{\tilde{C}_2,|{\rm Re}(q)|\} = C_2 \geq C_2 e^{-B(|k|+|m|)}.$$
\end{enumerate}

In any case, $P_0$ satisfies the \eqref{ADC}  condition.
\end{remark}

\medskip
In view of works \cite{Ber94,KMR19b,KMR19c}, it is natural to consider operators in the form
\begin{equation}\label{Pq(t,x)}
	P = \partial_t+c(t)\partial_0+ q(t,x), 
\end{equation}
where $c\in C^\omega(\mathbb{T}^1)$ and $q \in C^\omega(\TS)$.

\medskip
In the study of this class of operators we are led to consider also the constant-coefficient operator
\begin{equation}\label{P0q}
P_{00} \doteq  \partial_t + c_0\partial_0 + q_0,
\end{equation}
where 
$$c_0 = \frac{1}{2\pi} \int_0^{2 \pi} c(s)ds \mbox{ \ and \ } q_0 = \frac{1}{2\pi} \int_{\St} \int_0^{2 \pi}q(s,x)dsdx,$$
that is, $c_0$ and $q_0$ are the averages of functions $c(t)$ and $q(t,x)$, respectively.

\begin{prop}\label{nonconstant} Given $q \in C^\omega(\TS)$, assume that there is an analytic function $Q \in C^\omega(\mathbb{T}^1\times\St)$ such that $$(\partial_t+c(t)\partial_0){Q} = {q} - {q}_0.$$
 
 Then we have 
 $$
 P \circ e^{-Q} = e^{-Q} \circ P_{00},
 $$
 in both $\mathcal{D}'(\TS)$ and in $C^\omega(\TS)$. Hence, $P$ is (GAH) if and only if $P_{00}$ is (GAH).
\end{prop}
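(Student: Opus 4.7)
The proof is essentially an algebraic conjugation identity combined with the fact that multiplication by $e^{\pm Q}$ defines an automorphism of the relevant function and distribution spaces. My plan is to first verify the operator identity $P\circ e^{-Q} = e^{-Q}\circ P_{00}$ by a direct Leibniz computation, and then use this identity together with the invertibility of multiplication by $e^{\pm Q}$ to transfer the (GAH) property between $P$ and $P_{00}$.

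For the conjugation identity, I would apply $P$ to $e^{-Q}u$ and expand each derivative via the product rule. Writing out
\begin{align*}
P(e^{-Q}u) &= \partial_t(e^{-Q}u) + c(t)\partial_0(e^{-Q}u) + q(t,x)e^{-Q}u \\
&= e^{-Q}\bigl[\partial_t u + c(t)\partial_0 u + \bigl(q(t,x) - (\partial_t + c(t)\partial_0)Q\bigr)u\bigr],
\end{align*}
the hypothesis $(\partial_t + c(t)\partial_0)Q = q - q_0$ collapses the coefficient of $u$ to the constant $q_0$, which is precisely the zero-order part of $P_{00}$. This yields the desired identity $P\circ e^{-Q} = e^{-Q}\circ P_{00}$; the same computation is valid both on $C^\omega(\TS)$ and on $\mathcal{D}'(\TS)$ since $Q$ is smooth and Leibniz extends to distributions.

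Next I would verify that multiplication by $e^{\pm Q}$ is an automorphism of $\mathcal{D}'(\TS)$ and of $C^\omega(\TS)$. The distributional case is immediate: $e^{\pm Q}\in C^\infty(\TS)$ is nowhere zero, so multiplication is a continuous bijection with inverse given by multiplication by $e^{\mp Q}$. For the analytic case, since $Q\in C^\omega(\TS)$, the exponential $e^{\pm Q}$ is itself real analytic (the ring $C^\omega(\TS)$ is closed under composition with entire functions), and the product of two real analytic functions is real analytic; at the level of partial Fourier coefficients this can be checked directly against the decay estimate of Proposition \ref{functionanaly}.

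With these two ingredients in hand, the (GAH) equivalence is mechanical. Suppose $P_{00}$ is (GAH) and let $u\in\mathcal{D}'(\TS)$ satisfy $Pu\in C^\omega(\TS)$; set $v = e^{Q}u\in\mathcal{D}'(\TS)$. Composing the conjugation identity with $e^Q$ on the left gives $P_{00}v = e^{Q}Pu\in C^\omega(\TS)$, hence $v\in C^\omega(\TS)$ by the (GAH) assumption, and therefore $u = e^{-Q}v\in C^\omega(\TS)$. The converse implication is symmetric. I do not anticipate a serious obstacle: the Leibniz computation is routine, and the only point requiring minor care is the closure of $C^\omega(\TS)$ under products and under the exponential, which is classical.
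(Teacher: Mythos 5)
Your proposal is correct in substance and follows exactly the route the paper intends: the paper offers no written argument beyond deferring to an adaptation of Section 5 of \cite{KMR19b}, and that adaptation is precisely your three steps --- the Leibniz conjugation identity, the observation that multiplication by $e^{\pm Q}$ is an automorphism of $\mathcal{D}'(\TS)$ and of $C^\omega(\TS)$ (smoothness and non-vanishing of $e^{\pm Q}$ for the distributional side; stability of $C^\omega$ under products and exponentials for the analytic side), and the mechanical transfer of (GAH) through the identity.

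One step should be made explicit rather than absorbed into notation. Conjugation by the multiplication operator $e^{-Q}$ can only modify the zero-order term, and your computation in fact proves
$$
P\circ e^{-Q}=e^{-Q}\circ\bigl(\partial_t+c(t)\partial_0+q_0\bigr),
$$
with the first-order coefficient $c(t)$ left untouched. If $P_{00}$ were read literally as the constant-coefficient operator $\partial_t+c_0\partial_0+q_0$ of the displayed definition, both the identity and the asserted (GAH) equivalence would fail in general: in Example \ref{example2} the operator $P$ is never (GAH) because $\sin t$ changes sign, whereas for suitable $\alpha$ (say $\alpha=\sqrt{2}$) the constant-coefficient operator $\partial_t+\alpha\partial_0+i/2$ is (GAH). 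The paper's own examples show that $P_{00}$ is to be understood with $c(t)\partial_0$ retained, i.e.\ only the zero-order term is averaged, and with that reading your argument is complete. Your sentence identifying the computed right-hand side with $P_{00}$ on the grounds that $q_0$ is its zero-order part glosses over the first-order part, so you should state explicitly which operator the conjugation produces and that this is the $P_{00}$ of the statement.
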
 

The proof of this proposition is an adaptation of the corresponding result in section 5 of  \cite{KMR19b}.

\begin{ex}\label{example2} Let $Q:\emph{SU}(2) \rightarrow \mathbb{C}$ be the function given by 
$$
Q(a,b)=-2a\bar{b}, \mbox{ where } (a,b) \in \mathbb{C}^2 \mbox{ and } |a|^2+|b|^2 = 1.
$$ 
In Euler-angles coordinates we have 
$$Q(\phi,\theta,\psi)=ie^{i\psi}\sin(\theta),$$ 
hence 
$$\partial_0 Q = i \frac{\partial}{\partial \psi} Q = -Q.$$ 
If we consider $q:\TS \rightarrow \mathbb{C}$ defined by 
$$q(t,a,b) =2(\alpha+i\sin(t))a\bar{b}+i/2,$$ 
then $q_0 = i/2$ and 
$$(\partial_t+(\alpha+i\sin(t))\partial_0)Q = q-q_0.$$ 
In this way, the operator
$$
P = \partial_t+(\alpha+i\sin(t))\partial_0+q
$$
is conjugated with the operator $P_{00} = \partial_t+(\alpha+i\sin(t))\partial_0+i/2$, which is not (GAH) because $b(t)=\sin(t)$ changes sign. Therefore $P$ is not (GAH).
\end{ex}

\begin{ex} Let $c(t)=a(t)+i(\sin(t)+n)$ be a real analytic function on $\mathbb{T}^1$, where $n \in \mathbb{Z}\setminus \{0\}$  and $r\in\mathbb{C}$ with ${\rm Re}(r) \notin \mathbb{Q}$.
	
	Define a complex-valued function on $\TS$ by   
	$$q(t,a,b)=2c(t)a\bar{b}+r, \ (t,a,b)\in \mathbb{T}^1 \times \emph{SU}(2).$$ 

Hence, $q_0=r$ and $ (\partial_t+c(t)\partial_0)Q = q-q_0,$ where $Q$ is as in Example \ref{example2}. 

It follows from Example \ref{averagen} that the operator $$P_{00} = \partial_t+c(t)\partial_0+r  \mbox{ is (GAH) }$$ and, by Proposition \ref{nonconstant}, $P$ is also (GAH).	
\end{ex}

\section*{Acknowledgments}
This study was financed in part by the Coordenação de Aperfeiçoamento de Pessoal de Nível Superior - Brasil (CAPES) - Finance Code 001.

\end{document}